\newcommand{\III}{I\hspace{-0.1cm}I\hspace{-0.1cm}I}
\DeclareMathOperator{\area}{Area}
\DeclareMathOperator{\Imagin}{Im}
\DeclareMathOperator{\hol}{hol}
\newtheorem{theorem}{\rm\bf Theorem}[section]
\newtheorem{lemma}[theorem]{\rm\bf Lemma}
\newtheorem{definition}[theorem]{\rm\bf Definition}
\newtheoremstyle{named}{}{}{\itshape}{}{\bfseries}{.}{.5em}{#1 \thmnote{#3}}
\theoremstyle{named}
\newcommand{\C}{{\mathbb{C}}}
\newcommand{\N}{{\mathbb{N}}}
\newcommand{\HH}{{\mathbb{H}}}
\newcommand{\R}{{\mathbb{R}}}
\newcommand{\Z}{{\mathbb{Z}}}
\newcommand{\cC}{{\mathcal{C}}}
\newcommand{\cCC}{{\mathcal{CC}}}
\newcommand{\cH}{{\mathcal{H}}}
\newcommand{\cR}{{\mathcal{R}}}
\newcommand{\cT}{{\mathcal{T}}}
\newcommand{\chT}{\widehat{\mathcal{T}}}
\newcommand{\cML}{{\mathcal{ML}}}
\newcommand{\cCP}{{\mathcal{CP}}}
\newcommand{\tE}{\tilde E}
\newcommand{\tS}{\tilde S}
\newcommand{\epic}{\twoheadrightarrow}
\newcommand{\monic}{\xhookrightarrow{}}
\newcommand{\restr}[1]{|_{#1}}
\newcommand{\cMLrea}{\cML^{\rm{realizable}}_{\partial M}}
\newcounter{notes}%
\def\interieur#1{\mathord{\mathop{\kern{0pt #1}}\limits^\circ}}
\title[Pleating lamination]{Convex co-compact hyperbolic manifolds are determined by their pleating lamination}
\author{Bruno Dular}
\address{Bruno Dular:
University of Luxembourg, FSTM, Department of Mathematics, 
Maison du nombre, 6 avenue de la Fonte,
L-4364 Esch-sur-Alzette, Luxembourg}
\email{bruno.dular@uni.lu}
\author{Jean-Marc Schlenker}
\address{Jean-Marc Schlenker:
University of Luxembourg, FSTM, Department of Mathematics, 
Maison du nombre, 6 avenue de la Fonte,
L-4364 Esch-sur-Alzette, Luxembourg}
\email{jean-marc.schlenker@uni.lu}
\thanks{JMS was partially supported by FNR project O20/14766753.}
\date{v2, \today}
\begin{document}

\begin{abstract}
  Convex co-compact 3-dimensional hyperbolic manifolds are uniquely determined by the pleating measured lamination on the boundary of their convex core.
\end{abstract}

\maketitle

\tableofcontents

\section{Introduction}

\subsection{Background}

Let $M$ be a 3-dimensional convex co-compact hyperbolic manifold. Then $M$ contains a smallest non-empty geodesically convex subset, called its convex core and denoted here by $C(M)$. Thurston \cite[Chapter 8]{thurston-notes} noticed that the boundary of $C(M)$ is homeomorphic to $\partial_\infty M$, and is a disjoint union of locally convex pleated surfaces. As a consequence, its induced metric $m$ is hyperbolic -- of constant curvature $-1$ -- while its pleating is described by a measured lamination $l$ which is geodesic for $m$, called its {\em pleating}  or {\em bending} lamination. Thurston conjectured that $l$ (resp. $m$) uniquely determines $M$. The main goal here is to prove that, indeed, $l$ uniquely determines $M$.

Those two conjectures are relevant for 3-dimensional hyperbolic geometry, but also for Teichm\"uller theory. If true, they would mean that convex co-compact hyperbolic manifold provide a bridge between the ``hyperbolic'' Teichm\"uller theory on the boundary of the convex core -- where the data is in terms of hyperbolic metrics, measured laminations, etc -- and the ``complex'' Teichm\"uller theory on the boundary at infinity, which involves complex structures, holomorphic quadratic differentials, etc. (See e.g. \cite{volumes} for an extension of this point of view.)

The measured laminations on $\partial \bar M$ that can be realized as the pleating lamination of the convex core of a convex co-compact hyperbolic metric were determined by Bonahon and Otal \cite{bonahon-otal}. Their result was extended by Lecuire \cite{lecuire} to manifolds with compressible boundary.

\begin{theorem}[Bonahon--Otal, Lecuire]\label{tm:bonahon-otal}
  Let $\bar M$ be a compact 3-manifold with non-empty boundary, with all boundary components of genus at least 2, and such that the interior $M$ of $\bar M$ admits a complete hyperbolic metric. Let $l$ be a measured lamination on $\partial \bar M$. Assume that:
  \begin{enumerate}
    \item Each closed leaf of $l$ has weight less than $\pi$,
    \item For each essential disk $D$ in $\bar M$, $i(l,\partial D)>2\pi$,
    \item There exists $\eta >0$ such that $i(\partial A,l)\geq\eta$ for each essential annulus $A$ in $\bar M$.
  \end{enumerate}
  Then there exists a non-Fuchsian, convex co-compact hyperbolic metric on $M$ such that $l$ is the measured pleating lamination of the boundary of the convex core of $M$.

  In the quasifuchsian case, the second condition is vacuous and the third condition means that the pair of laminations on the two boundary components of $M$ is filling.
\end{theorem}

This theorem is actually stated in \cite{bonahon-otal,lecuire} in the more general case of geometrically finite hyperbolic manifolds, where closed leaves of weight $\pi$ can appear. Bonahon and Otal \cite{bonahon-otal} also showed that {\em rational} laminations --- measured laminations with support a disjoint union of closed curves --- can be uniquely realized. Moreover, Bonahon \cite{bonahon-almost} showed that any small enough pair of filling measured laminations can be realized uniquely as the measured pleating lamination of the boundary of the convex core of a quasifuchsian manifold close to the Fuchsian locus. For quasifuchsian manifolds over the once punctured torus, uniqueness of the realization was proved by Series \cite{series2006thurstons}.

\begin{definition}
  We denote by  $\cML^{\rm{realizable}}_{\partial M}$ the space of measured laminations on $\partial \bar M$ which satisfy the hypothesis of Theorem \ref{tm:bonahon-otal}.
\end{definition}

\subsection{Result}

The main result here is the following statement, known as Thurston's bending conjecture.

\begin{theorem}\label{tm:main}
  Under the hypothesis of Theorem \ref{tm:bonahon-otal}, the hyperbolic structure on $M$ is uniquely determined by the bending lamination $l$ on the boundary of its convex core.
\end{theorem}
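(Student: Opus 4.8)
The plan is to realize the bending lamination as the differential of a natural functional on the deformation space of $M$, and to deduce injectivity of the bending map from a convexity property of that functional. By the Ahlfors--Bers theorem, the space of convex co-compact structures on $M$ is parametrized by $\cT(\partial_\infty M)$; for $c$ in this space write $M_c$ for the corresponding manifold, $C(M_c)$ for its convex core, $m_c$ for the induced metric on $\partial C(M_c)$ --- a point of $\cT(\partial_\infty M)$ via Thurston's homeomorphism $\partial C(M_c)\cong\partial_\infty M_c$ --- and $l_c$ for the bending lamination. Theorem \ref{tm:bonahon-otal} says the bending map $\beta\colon c\mapsto l_c$ surjects onto $\cMLrea$; the goal is to prove $\beta$ is injective there, which is at least dimensionally plausible since $\dim\cT(\partial_\infty M)=\dim\cML(\partial\bar M)$.

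For the variational characterization, let $V_C(c)=\mathrm{vol}\,C(M_c)$ and, for a hyperbolic metric $m$ and a measured lamination $\mu$, let $L_m(\mu)$ be the length of $\mu$ in $m$ --- recall that $m\mapsto L_m(\mu)$ is real-analytic and extends linearly in $\mu$ to signed combinations. Let $V^*_C(c):=V_C(c)-\tfrac12 L_{m_c}(l_c)$ be the dual volume. Bonahon's Schl\"afli-type formula for convex cores, in its dual form, asserts that $dV^*_C|_c(\dot c)=-\tfrac12\, d_m[L_m(l_c)]_{m=m_c}(\dot m_c)$: under a deformation, only the variation of the induced metric contributes. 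Fixing $l\in\cMLrea$ and setting $F_l(c):=V^*_C(c)+\tfrac12 L_{m_c}(l)$, one gets $dF_l|_c(\dot c)=\tfrac12\,d_m[L_m(l-l_c)]_{m=m_c}(\dot m_c)$. At a critical point this vanishes on the image of $\dot c\mapsto\dot m_c$, hence --- using that the induced-metric map has surjective differential, a regularity property of convex cores --- on all of $T_{m_c}\cT(\partial_\infty M)$; since $\mu\mapsto d_m[L_m(\mu)]$ is injective on (signed) measured laminations, a form of Thurston's duality between $\cML$ and the cotangent bundle via the length pairing, this forces $l_c=l$. So the structures realizing $l$ are precisely the critical points of $F_l$, of which there is at least one by Theorem \ref{tm:bonahon-otal}, and uniqueness of the structure is equivalent to uniqueness of that critical point.

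It remains to show $F_l$ has a unique critical point. The plan is to prove $F_l$ is strictly convex along Weil--Petersson geodesics of $\cT(\partial_\infty M)$, or at least that any two critical points lie in a region where such convexity holds: the term $\tfrac12 L_{m_c}(l)$ is controlled by Wolpert's theorem on strict Weil--Petersson convexity of length functions of measured laminations (together with the behaviour of $c\mapsto m_c$), while the dual-volume term $V^*_C$ requires a definite-sign second-variation estimate tied to the known concavity properties of the renormalized and dual volumes of convex co-compact manifolds. When $l$ is rational, uniqueness is already in \cite{bonahon-otal}, which also provides a density input; the interval-bundle case (hypothesis (3) of Theorem \ref{tm:bonahon-otal}) is treated separately, since there $\cMLrea$ has a different shape. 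One also needs a properness statement --- compactness of $\beta^{-1}(l)$, coming from the control the bending lamination exerts on the geometry of the convex core under hypotheses (1)--(3) --- to run the argument cleanly and to pass to limits of laminations with short closed leaves.

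The main obstacle is the convexity, i.e.\ the sign of the second variation of $F_l$: a priori $V^*_C$ is not convex on all of $\cT(\partial_\infty M)$, so one must either restrict to carefully chosen paths and use the structure of the first variation to pin down the sign, or localize the argument to a region where convexity holds and show that all critical points lie there. An alternative route --- proving directly that no nontrivial infinitesimal deformation of the structure fixes the bending lamination to first order, then upgrading to global injectivity by a covering/degree argument using the dimension count and the connectivity of $\cMLrea$ --- runs into the fact that $\beta$ is a non-smooth map near laminations without closed leaves, so the required infinitesimal rigidity is itself delicate and must be set up using Bonahon's transverse-Hölder calculus on $\cML$.
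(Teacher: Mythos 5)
Your variational strategy is genuinely different from the paper's, and it is not a route the paper takes. The paper proves uniqueness by showing that each fibre $\psi^{-1}(\{l\})$ is simultaneously (a) contractible --- because $\psi$ is a uniform-on-compacts limit of the homeomorphisms $\psi_K$ given by third fundamental forms of $K$-surfaces, so that a pseudo-isotopy/decomposition-theory argument (Finney, Daverman) shows the fibres are cellular, and Sullivan's local-cone structure for compact real analytic sets makes them ANRs, hence contractible --- and (b) a compact real analytic subvariety of $\cCC(M)$, via Bonahon's shear--bend coordinates, which by Sullivan's mod~$2$ pseudo-manifold observation forces $H_k(F,\Z_2)\neq 0$ and hence non-contractibility unless $F$ is a point. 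There is no energy functional, no Schl\"afli formula, and no convexity anywhere in the paper's argument.

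The gap in your proposal is exactly where you locate it, and it is fatal rather than cosmetic: the convexity of $F_l(c)=V^*_C(c)+\tfrac12 L_{m_c}(l)$ on $\cT(\partial_\infty M)$ is not known and is not a technicality you can expect to dispatch in passing. The dual-volume term $V^*_C$ is not known to have any global sign on its Weil--Petersson Hessian; the first-variation identity you quote (the dual Bonahon--Schl\"afli formula) pins down $dV^*_C$ but gives no control on $d^2V^*_C$, and the composite $c\mapsto m_c$ through whose differential Wolpert's convexity must be pushed is itself only known to be continuous (it is not even known to be $C^1$ in general). So both halves of the second variation of $F_l$ are out of reach, and the fallback options you list (restrict to special paths, localize, density from the rational case, infinitesimal rigidity plus degree theory) each reintroduce exactly the kind of regularity or rigidity that is not available: the bending map is not differentiable at laminations without closed leaves, and infinitesimal rigidity with respect to the bending lamination is explicitly flagged in the paper as \emph{not} following from its methods. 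What your outline establishes, granting the dual Schl\"afli formula and a regularity assumption on $c\mapsto m_c$, is that $\psi^{-1}(\{l\})$ coincides with the critical set of $F_l$; but without the convexity (or some substitute, such as the Morse-theoretic or mountain-pass input that would rule out multiple critical points) this does not yield uniqueness. The paper's contribution is precisely to avoid this convexity question by replacing it with the topological dichotomy in (a)--(b) above.
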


In other terms, each $l\in \cMLrea$ is the bending lamination on the boundary of the convex core for a unique convex co-compact hyperbolic metric on $M$.

The strategy of the proof might also extend to geometrically finite, rather than convex co-compact, hyperbolic manifolds. However, several key ingredients of the proof currently lack a generalization to that case.

\subsection{Main ideas of the proof}

We consider a compact 3-manifold $\bar M$ whose interior $M$ admits a complete hyperbolic metric, with all boundary components of genus at least $2$. We denote by $\cCC_M$ the space of convex co-compact hyperbolic metrics on $M$, considered up to isotopy\footnote{See Section 5.3 in \cite{matsuzaki1998hyperbolic} for a detailed presentation of the distinction between the \emph{isotopy} and \emph{homotopy} deformation spaces.}.

We denote by $\cT_{\partial M}$ the Teichm\"uller space of $\partial \bar M$, and by $\cML_{\partial M}$ the space of measured laminations on $\partial \bar M$. More generally, when $S$ is a closed surface (which typically will be a boundary component of $M$) we denote by $\cT_S$ and by $\cML_S$, respectively, the Teichm\"uller space of $S$ and the space of measured laminations on $S$. For $K\in [-1,0)$, we denote by $\cT_S^K$ the space of metrics of constant curvature $K$ on $S$, which is homeomorphic to $\cT_S$ through rescaling.

Recall that by the celebrated Ahlfors-Bers Theorem (see e.g. \cite[\S 5.1, 5.2]{marden:hyperbolic} or \cite[\S 5.3]{matsuzaki1998hyperbolic}), the map sending a convex co-compact hyperbolic structure $g\in \cCC_M$ to its conformal structure at infinity $c\in\cT_{\partial M}$ is a homeomorphism from $\cCC_M$ to $\cT_{\partial M}$. 

Theorem \ref{tm:main} is equivalent to the statement that the map $\psi$ defined below is injective (outside of the Fuchsian locus).

\begin{definition}
  Let $\psi:\cCC_M\to \cML_{\partial M}$ be the map sending a convex co-compact hyperbolic metric $g\in \cCC_M$ to the measured bending lamination on the boundary of its convex core.
\end{definition}

This map is known to be continuous \cite{keen1995continuity}, \emph{tangentiable} \cite{bonahon1998variations} and surjective onto $\cMLrea$ by Theorem \ref{tm:bonahon-otal} \cite{bonahonotal2004laminations}.

Theorem \ref{tm:main} follows from the two following lemmas on the fibres of $\psi$.
\begin{lemma} \label{lm:contractible}
  For any $l\in \cMLrea$, the fibre $\psi^{-1}(\{l\})$ is contractible.
\end{lemma}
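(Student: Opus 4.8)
The plan is to prove that the fiber $\psi^{-1}(\{l\})$ is contractible by exhibiting it as the fiber of a related, better-behaved map and using a deformation-retraction argument onto a point, or alternatively by showing that $\psi^{-1}(\{l\})$ carries a natural convex structure. The first thing I would do is recall the parametrization of $\cCC(M)$ by the induced metric on the boundary of the convex core: by the work on the geometry of the convex core, a convex co-compact metric $g$ is determined by the pair $(m,l)\in \cT_{\partial M}\times\cML_{\partial M}$ consisting of the induced metric $m$ and the bending lamination $l$, and by a theorem of Labourie (and, in the closed-surface-bundle case, the appropriate analogue) every $m\in\cT_{\partial M}$ arises as the induced metric on the convex core boundary of some $g\in\cCC(M)$. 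Thus there is a surjective map $\Phi:\cCC(M)\to\cT_{\partial M}$, $g\mapsto m$, and restricting $\Phi$ to $\psi^{-1}(\{l\})$ gives a map into $\cT_{\partial M}$ whose image is the set of hyperbolic metrics on $\partial\bar M$ admitting $l$ as a realizable bending lamination. The target $\cT_{\partial M}$ is homeomorphic to a ball, hence contractible, so it suffices to show $\Phi\restr{\psi^{-1}(\{l\})}$ is a homeomorphism onto its (convex, or at least contractible) image.

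The key steps, in order, would be: (1) Show that $\Phi\restr{\psi^{-1}(\{l\})}$ is injective — but this is exactly the statement that the pair $(m,l)$ determines $g$, which is classical (the convex core boundary data determine the developing map of the pleated surface, hence the holonomy, hence $g$ up to isotopy). (2) Show that $\Phi\restr{\psi^{-1}(\{l\})}$ is a local homeomorphism; this follows from the smoothness (or at least continuity with local inverses) of the map $g\mapsto(m,l)$, together with the fact that fixing $l$ cuts out a submanifold transverse to the fibers of $\Phi$. (3) Identify the image $\Phi(\psi^{-1}(\{l\}))\subset\cT_{\partial M}$ as the set $U_l$ of $m\in\cT_{\partial M}$ such that $l$ is $m$-geodesic and $(m,l)$ satisfies an intersection/weight condition analogous to Theorem \ref{tm:bonahon-otal}, and show $U_l$ is contractible — ideally by showing it is star-shaped or convex in suitable coordinates, for instance by scaling the metric or by interpolating between metrics along the grafting/bending flow. (4) Conclude that $\psi^{-1}(\{l\})$, being homeomorphic to a contractible set $U_l$, is contractible.

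An alternative and perhaps cleaner route, which I would keep in reserve, is to use the grafting map: for fixed $l$, the operation of grafting along $l$ identifies the locus of metrics with bending lamination $l$ with a convex subset of $\cT_{\partial M}$ (the image of the \emph{conformal-grafting} or \emph{projective-grafting} section), since grafting by a fixed lamination is a homeomorphism onto its image and interacts well with the Thurston parametrization of $\cP(\partial M)$ by $\cT_{\partial M}\times\cML_{\partial M}$. In that language $\psi^{-1}(\{l\})$ would correspond to $\mathrm{gr}_l(\cT_{\partial M})\cap(\text{realizability locus})$, and contractibility would follow from contractibility of $\cT_{\partial M}$ together with the observation that the realizability constraints (each leaf weight $<\pi$; positive intersection with boundaries of essential annuli or with $p^*l'$) are "open and convex" in the relevant direction.

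The main obstacle I expect is Step (3): controlling the image $U_l$ and proving it is contractible. Injectivity and local-homeomorphism are essentially soft once one has the standard regularity of the convex-core boundary parametrization, but pinning down \emph{which} induced metrics $m$ are compatible with a fixed bending lamination $l$ — and showing this set deformation-retracts to a point — requires understanding how the realizability conditions of Theorem \ref{tm:bonahon-otal} (which are stated for laminations, with $M$ fixed) translate into conditions on $m$ with $l$ fixed, and then producing an explicit contraction (scaling $m$, or flowing along bending) that stays inside $U_l$. Handling the special interval-bundle case, where the intersection condition involves all measured laminations pulled back from the base surface, will likely need separate care.
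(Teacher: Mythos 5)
Your approach is genuinely different from the paper's, and it has a serious gap at exactly the point you flag.

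The paper proves the lemma by an indirect topological argument: it shows that $\psi$ is a uniform-on-compacts limit of genuine homeomorphisms (the maps $U_K\circ\psi_K$ built from $K$-surface third fundamental forms composed with an earthquake-based re-identification of $\cT^{K^*}_S$ with $\cML_S$), invokes Finney's theorem to conclude that compact fibers of $\psi$ are \emph{cellular}, and separately uses Bonahon's shear-bend coordinates to show that $\psi^{-1}(\{l\})$ is a compact real analytic subvariety, hence (by Sullivan) an ANR. A cellular ANR is contractible. At no point does one need to understand the internal geometry of the fiber or its image under any section.

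Your plan is instead to transfer contractibility from a subset $U_l\subset\cT_{\partial M}$ by the map $g\mapsto m$. The step you call ``(3)'' is where this breaks down, and you have substantially underestimated it. First, the auxiliary condition ``$l$ is $m$-geodesic'' is vacuous: every measured lamination has a geodesic representative in every hyperbolic metric, so it cuts out nothing. Second, the conditions of Theorem~\ref{tm:bonahon-otal} are purely topological conditions on $l$ (given $\bar M$); they do not involve $m$ at all, and they say only that $l$ is realized by \emph{some} convex co-compact metric. They give no description of \emph{which} $m$'s pair with a fixed $l$. The honest description of $U_l$ is: the set of $m$ such that bending the pleated surface $(S,m)$ along $l$ produces a discrete, faithful, convex co-compact holonomy whose convex core boundary is locally convex and has the correct topology. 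This is an extremely implicit, non-local condition on $m$, and there is no reason a priori for it to be convex, star-shaped, or even connected; the scaling/bending flows you propose are not known to preserve it. In short, ``show $U_l$ is contractible'' is the content of the lemma re-expressed, not a reduction of it --- and proving $U_l$ is a \emph{point} would be Theorem~\ref{tm:main} itself. Your step (1) is also stated a bit too casually (``classical''): the pleated-surface holonomy determines the restriction of the holonomy of $\pi_1 M$ to each boundary subgroup $\pi_1(\partial_i M)$, and one needs the additional observation that the convex core is the convex hull of its boundary to recover all of $g$; this is fine but should be argued. The grafting alternative you mention in reserve has the same fundamental gap: the realizability locus inside $\mathrm{gr}_l(\cT_{\partial M})$ is not visibly convex, and asserting that the constraints are ``open and convex in the relevant direction'' is precisely what would need to be proved.
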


\begin{lemma} \label{lm:non-contractible}
  For any $l\in\cMLrea$, the fibre $\psi^{-1}(\{l\})$ is a compact analytic subset of $\cCC_M$. Therefore it is non-contractible, unless it is a point.
\end{lemma}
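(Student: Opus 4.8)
The statement has two parts: that $\psi^{-1}(\{l\})$ is a compact analytic subvariety, and that a nonempty compact analytic subvariety of $\cCC(M)$ is either a point or non-contractible. Let me think about each.

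For compactness: $\psi^{-1}(\{l\})$ should be closed because $\psi$ is continuous, and it should be bounded/precompact in $\cCC(M) \cong \cT_{\partial M}$. The bound should come from estimates relating the bending lamination to the geometry of the convex core — e.g., Bonahon–Otal-type a priori bounds, or the fact that a sequence of convex cocompact metrics with fixed bending lamination $l$ cannot degenerate (if the conformal structure at infinity degenerated, some curve on the boundary would get pinched, but then either $i(l, \cdot)$ or the geometry of the pleated boundary would blow up, contradicting fixed $l$; one uses that $l$ has no closed leaf of weight $\geq \pi$ and the hypotheses of Theorem \ref{tm:bonahon-otal}). Analyticity: the bending lamination map, or rather the conditions "$\psi(g) = l$", should be cut out by real-analytic equations. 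Even though $\cML_{\partial M}$ is not a manifold, fixing the support of $l$ reduces to finitely many real-analytic conditions on the holonomy (the bending angles along the leaves, or along transversals, agreeing with $l$), so the fiber is a real-analytic subset of the real-analytic manifold $\cCC(M)$.

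For the topological dichotomy: a compact connected real-analytic subvariety $V$ of a Stein-like or more relevantly a \emph{negatively curved / aspherical} space... Actually the right tool here is that $\cCC(M) \cong \cT_{\partial M}$ is a cell (contractible), but more to the point one wants: a compact analytic subvariety $V$ that is contractible must be a point. The mechanism: $V$, being a compact real-analytic variety of positive dimension, has nonzero homology or cohomology in top dimension — more precisely, a compact real-analytic set which is contractible would have to be acyclic, but a positive-dimensional compact analytic variety carries a fundamental class (or one argues via the fact that it cannot retract to a point while being a finite CW complex of positive dimension with some nontrivial topology). The cleanest route: a compact real-analytic subvariety of a real-analytic manifold is triangulable, hence a finite CW complex; if it were contractible it would in particular be a compact contractible manifold-with-singularities; one then invokes that a positive-dimensional compact set cannot be contractible \emph{as an analytic subvariety of a manifold} — e.g. via the local structure / Alexander duality inside the ambient cell, or via Morse theory for a generic analytic/algebraic function which must have $\geq 2$ critical points on a positive-dimensional compact set.

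The main obstacle I expect is making the "compact + analytic $\Rightarrow$ point or non-contractible" step rigorous and clean, since $\psi^{-1}(\{l\})$ could a priori be singular and reducible; I would handle it by reducing to: (i) triangulate $V$ to get a finite CW complex; (ii) take an irreducible component $V_0$ of maximal dimension $d \geq 1$; (iii) argue $H_d(V_0) \neq 0$ (fundamental class of a compact analytic variety with respect to, say, $\Z/2$ coefficients, which always exists by the triangulation into $d$-simplices with no free faces after passing to the smooth locus closure) and then that this class survives in $H_*(V)$ or obstructs contractibility by a Mayer–Vietoris / excision argument over the components. A secondary obstacle is the precompactness argument in $\cCC(M)$; I would route it through the known properness-type results on the bending lamination (the fact, essentially in \cite{bonahon-otal} and \cite{lecuire}, that the map $\psi$ is proper onto $\cMLrea$, or at least that $\psi^{-1}(\{l\})$ is bounded), citing the relevant a priori length and intersection-number bounds for the pleated boundary of the convex core.
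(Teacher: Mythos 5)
Your decomposition of the proof --- compactness from properness of the bending map, analyticity from real-analytic conditions on holonomy, and non-contractibility from triangulability plus a top-dimensional $\Z/2$ cycle --- is essentially the paper's. For the last step, the paper invokes Sullivan's result that a compact real-analytic set $F$ triangulates so that every link has even Euler characteristic; hence the sum of all $k$-dimensional simplices is a mod~$2$ cycle, giving $H_k(F;\Z_2)\neq 0$ whenever $\dim F = k\geq 1$. You do not need the detour through a maximal-dimensional irreducible component and the ``does the class survive'' worry: Sullivan's theorem applies to $F$ itself, and a top-dimensional cycle cannot bound since there are no $(k+1)$-simplices.

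The genuine gap in your sketch is the analyticity step. For a non-rational $l$, the phrase ``fixing the support of $l$ reduces to finitely many real-analytic conditions on the holonomy (the bending angles along the leaves, or along transversals, agreeing with $l$)'' does not yet yield anything: the support of $l$ has uncountably many leaves, and it is not clear a priori that the condition ``the bending of the pleated boundary realizes $l$'' can be written as a finite system of analytic equations on the representation variety. The missing device is Bonahon's shear-bend cocycle. Choosing a maximal geodesic lamination $\lambda$ containing the support of $l_i$, Bonahon's map $\rho\mapsto\Gamma_\rho$ is a biholomorphism from the space $\cCP_{\partial_iM}$ of complex projective structures onto a cone in the finite-dimensional complex vector space $\cH(\lambda;\C/2\pi i\Z)$ of transverse cocycles, and the condition ``the bending cocycle of the pleated surface is $l_i$'' is the preimage of a totally real subspace. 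That is what exhibits $\psi^{-1}(\{l\})$ as a real-analytic subvariety for an arbitrary $l$ --- it is the content of Lemma~\ref{lm:analytic} --- and without that parametrization your plan is only convincing for rational laminations.
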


\subsubsection{Contractibility of the fibers}

To prove Lemma \ref{lm:contractible}, we will show that $\psi$ is a limit of homeomorphisms. 

\begin{definition}
  Let $K\in (-1,0)$. We denote by $\psi_K: \cCC_M\to \cT^{K^*}_{\partial M}$ the map sending a convex co-compact hyperbolic metric $g$ on $M$ to the third fundamental form on the closed surface $S_K$ of constant curvature $K$ in $M$.
\end{definition}

Here $S_K$ is in general non-connected, and has one connected component for each boundary component of $M$ -- it follows from a result of Labourie (see Theorem \ref{tm:labourie} below) that this $K$-surface is well-defined. The third fundamental for $\III_K$ on $S_K$ then has constant curvature $K^*=K/(K+1)$, see \cite[Prop. 2.3.2]{L5}. It follows from \cite[Theorem 0.2]{hmcb} that for all $K\in (-1,0)$, $\psi_K$ is a homeomorphism onto its image.

\begin{lemma} \label{lm:convergence}
  As $K\to -1$, $\psi_K\to \psi$ pointwise in the marked length spectrum topology. Moreover, for any closed curve $c$ on $\partial \bar M$, $L_{\psi_K(-)}(c)\to i(\psi(-),c)$ uniformly on compact subsets of $\cCC_M$. 
\end{lemma}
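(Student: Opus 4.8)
The plan is to study the convergence of the $K$-surfaces $S_K$ to the boundary $\partial C(M)$ of the convex core as $K \to -1$, and to deduce convergence of the associated geometric data. First I would recall, from Labourie's theorem (Theorem \ref{tm:labourie}) together with the monotonicity of the family $(S_K)_{K\in(-1,0)}$, that the surfaces $S_K$ sweep out the complement of the convex core and converge to $\partial C(M)$ in the Hausdorff topology as $K\to -1$; this is the content (or an immediate consequence) of the foliation of the concave side of the convex core by constant curvature surfaces established by Labourie. One then wants to upgrade this geometric convergence to convergence of the third fundamental forms $\III_K$, viewed as metrics on the fixed topological surface $\partial \bar M$, to the dual-like object associated to $\partial C(M)$. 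The key point is that for a pleated surface the third fundamental form degenerates exactly along the bending lamination, and its ``length'' of a transverse arc records the total bending, i.e.\ the intersection with the pleating measured lamination $l=\psi(g)$. So for a fixed closed curve $c$ on $\partial\bar M$, realized as a curve on $S_K$, the $\III_K$-length $L_{\psi_K(g)}(c)$ should converge to $i(\psi(g),c)$.

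The key steps, in order, are: (1) fix $g\in\cCC(M)$ and a closed curve $c$ on $\partial\bar M$; lift to the universal cover and work with the nearest-point projection $r_K: S_K \to \partial C(M)$, or more conveniently with the normal exponential flow from $\partial C(M)$ outward, which at ``time'' parametrized by $K$ reaches $S_K$. (2) Using Labourie's description, get uniform (in $K$ near $-1$) control on the second fundamental form / shape operator of $S_K$: its principal curvatures lie in an interval shrinking towards those of a surface of curvature $K$, and crucially the product of principal curvatures is $K+1 \to 0$, so one principal curvature stays bounded while the relation $\det = K+1$ forces degeneration in the pleated directions. (3) Translate this into the statement that, as a path metric on $\partial\bar M$, $\III_K = \langle B\cdot, B\cdot\rangle_{I_K}$ (with $B$ the shape operator) concentrates its length near the locus that converges to the support of $l$, and that the $\III_K$-length of the geodesic representative of $c$ converges to the $l$-measure of $c$, i.e.\ $i(l,c)$. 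Here I would use that $c$ is not freely homotopic into the support of $l$ in a way that would make the limit ill-defined — a transversality input that ultimately comes from realizability of $l$ — and the standard fact that a sequence of curves of bounded-below, bounded-above $\III_K$-length in a sweepout converges to a geodesic current whose pairing with $c$ is $i(l,c)$. (4) Finally, upgrade pointwise convergence to uniform convergence on compact subsets of $\cCC(M)$: the estimates in steps (2)–(3) depend only on lower bounds for the injectivity radius and upper bounds for the diameter of the convex core boundary, both of which are continuous functions of $g$ and hence bounded on a compact set; combined with continuity of $\psi_K$ for each fixed $K$ and of $\psi$ (continuity of the bending lamination in the length-spectrum topology, which follows from Bonahon's work on geodesic currents), an Arzel\`a--Ascoli / equicontinuity argument gives the uniform statement.

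I expect the main obstacle to be step (3): making precise the claim that the third fundamental form of the approximating $K$-surface ``remembers'' the bending measure in the limit, i.e.\ that $L_{\III_K}(c) \to i(l,c)$ rather than to some larger quantity picking up contamination from the smooth (non-pleated) part of $\partial C(M)$ or from the regions where $S_K$ is still far from $\partial C(M)$. On the smooth part of $\partial C(M)$ the surface $S_K$ is uniformly close to a totally geodesic piece, so its shape operator is $O(\sqrt{K+1})$ there and contributes $O(\sqrt{K+1})\cdot\mathrm{length}(c)\to 0$ to $L_{\III_K}(c)$; near a bending line of weight $\theta$, a transverse arc of $S_K$ looks like a smoothing of an angle $\theta$ and contributes $\to \theta$. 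Quantifying the transition region — the neighborhood of the pleating locus where neither model is exactly valid — and showing its contribution is negligible is the delicate estimate; I would handle it by comparison with the model bent surface (the boundary of a convex set bent along a single geodesic) where everything can be computed explicitly in terms of hyperbolic trigonometry, and then patch using the Hausdorff convergence $S_K\to\partial C(M)$ together with a compactness argument on the space of such local models. The uniformity over compact sets of $\cCC(M)$ in step (4) is then a matter of checking that all constants in these local models depend only on the injectivity radius lower bound, which is the point at which convex co-compactness (compact convex core) is used.
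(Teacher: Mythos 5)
Your proposal takes a genuinely different route from the paper's, and there is a real gap at the step you yourself flag as the main obstacle.

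The paper does not attempt a direct estimate on the degeneration of $\III_K$ near the pleating locus in the hyperbolic end. Instead it passes through the polar duality between the hyperbolic end $E$ and the GHMC de Sitter spacetime $E^*$ (Section \ref{ssc:duality}), where the $K$-surface $S_K$ becomes a space-like Cauchy surface $S_K^*$ whose induced metric \emph{is} $\III_K$. On the de Sitter side the whole argument reduces to two soft inputs: (i) a Gauss--Bonnet area estimate showing that $S_K^*$ lies within a time-distance $\Delta(K)\to 0$ of the initial singularity, hence in the past of the constant-cosmological-time leaf $\Sigma_{\Delta(K)}$; and (ii) the fact, quoted from \cite{belraouti:asymptotic}, that projection along the normal lines of a convex foliation in a Lorentzian spacetime is \emph{expanding}. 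This immediately yields the two-sided sandwich $i(l,c) \leq L_{\III_K}(c) \leq L_{g_{\Delta(K)}}(c)$ with no local analysis near the pleating locus whatsoever, and the limit $L_{g_\tau}(c)\to i(l,c)$ (uniformly on compacts) is already known from the grafting description of the cosmological-time metrics. Nothing in the paper's proof requires any control on second fundamental forms, on transition regions, or on models of smoothed bending.

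Your step (3) is exactly where this strategy diverges. Your plan to control $\III_K$ by decomposing $\partial C(M)$ into flat pieces (where $B=O(\sqrt{K+1})$), bending lines (where a transverse arc of $S_K$ contributes $\to\theta$), and a ``transition region'' to be handled by comparison with an explicit single-bend model only works cleanly when the pleating locus is a finite union of closed geodesics. For a general $l\in\cMLrea$ the support need not be discrete, there is no open ``smooth part'' of positive measure separating distinct leaves, and the local model of a single bend line does not patch; quantifying the contribution of a neighborhood of a non-rational lamination in this way is precisely the kind of estimate that has no clean patching argument. You would either need a density-and-continuity argument to reduce to the rational case (which requires equicontinuity in $l$ that you have not established), or a genuinely different monotonicity principle. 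The paper's use of the de Sitter cosmological-time foliation supplies exactly that monotonicity for free — the expanding projection — so the two difficult inequalities become one-line applications of \cite[Prop.~3.1]{belraouti:asymptotic} rather than a local analysis. Your step (4) (uniformity via Arzel\`a--Ascoli and continuity of the constants) is plausible in spirit and not far from what the paper does, but it depends on repairing step (3) first.

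In short: the route is legitimately different, not a re-derivation; but as written it is incomplete exactly where the paper's duality argument does its work, and the gap is not a minor technical one.
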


In other words, for any closed curve $c$ on $\partial \bar M$ and any $g\in \cCC_M$,
$$ \lim_{K\to -1} L_{\psi_K(g)}(c) \to i(\psi(g), c)~, $$
where $L_{\psi_K(g)}(c)$ denotes the length of the geodesic representative of $c$ in the constant curvature metric $\psi_K(g)$ and $i(\psi(g), c)$ denotes the intersection between the measured lamination $\psi(g)$ and $c$.

Since $\psi$ is a limit of a sequence of homeomorphisms, we can use results from the decomposition theory of manifolds \cite{finney1967pseudo,daverman1986decompositions} to conclude that its fibres are contractible, see Section \ref{ssc:pseudoisotopy}.

\subsubsection{Non-contractibility of the fibers}

To prove that the inverse image by $\psi$ of a point in $\cML_{\partial M}$ is either a point or non-contractible, we will use the following statement.

\begin{lemma} \label{lm:analytic}
  Let $l\in \cML_{\partial M}$ satisfying the hypothesis of Theorem \ref{tm:bonahon-otal}. Then $\psi^{-1}(\{ l\})$ is a real-analytic subset in $\cCC_M$.  
\end{lemma}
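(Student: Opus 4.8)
The plan is to realize $\psi^{-1}(\{l\})$ as the zero set (or a level set) of a real-analytic map between real-analytic manifolds, and then invoke the fact that such zero sets are real-analytic varieties. The two ingredients one needs are: (i) a real-analytic structure on $\cCC(M)$, and (ii) a real-analytic map whose fiber over a suitable point is exactly $\psi^{-1}(\{l\})$. For (i), via Ahlfors--Bers we identify $\cCC(M)$ with $\cT_{\partial M}$, which carries a canonical real-analytic (indeed complex-analytic, via the Bers embedding) structure; alternatively one uses the real-analytic structure on the $\PSL(2,\C)$-character variety of $\pi_1(M)$ and the fact that $\cCC(M)$ is an open subset of a smooth real-analytic component of it. Either description will do, and one should note that the two agree. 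The subtlety is that $l$ is a general measured lamination, not just a weighted multicurve, so one cannot literally write $\psi^{-1}(\{l\})$ as the common zero set of finitely many bending-angle functions; one must argue more carefully.

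The key step is to separate two conditions cut out by $\psi(g) = l$: the condition that the \emph{support} of the bending lamination of $g$ is contained in $|l|$, and, on the locus where that holds, the condition that the \emph{transverse measure} agrees with that of $l$. First I would fix $g_0 \in \psi^{-1}(\{l\})$ (if the fiber is empty there is nothing to prove, and if it is a single point it is trivially a real-analytic variety) and work in a neighborhood $U$ of $g_0$ in $\cCC(M)$. The bending lamination varies continuously (in the measured lamination topology) with $g$, and near $g_0$ its support stays in a fixed small neighborhood of $|l|$; this lets one reduce to a train-track neighborhood $N$ carrying $|l|$, so that for $g \in U$ the bending lamination $\psi(g)$ is carried by $N$ and is thus described by finitely many nonnegative transverse weights $w_1(g), \dots, w_n(g)$ on the branches of the train track, subject to the switch relations, with $l$ itself corresponding to a fixed weight vector $(w_1^0, \dots, w_n^0)$. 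The crucial analyticity input is that these weight functions $w_i : U \to \R_{\ge 0}$ are real-analytic; this follows from the real-analytic dependence of the convex core boundary — in particular of its pleating locus and dihedral angles along it — on the hyperbolic structure, which in turn rests on the real-analyticity of the developing map / holonomy representation in $g$ and of the nearest-point projection onto the convex hull (here one may cite the analyticity results already used for $\psi_K$ and for Labourie's $K$-surfaces, or Bonahon's work on the analyticity of bending cocycles). Granting this, $\psi^{-1}(\{l\}) \cap U = \{ g \in U : w_i(g) = w_i^0 \text{ for all } i \}$ is the zero set in $U$ of the real-analytic map $g \mapsto (w_1(g) - w_1^0, \dots, w_n(g) - w_n^0)$, hence a real-analytic subvariety of $U$. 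Since real-analyticity is a local property and $g_0$ was arbitrary, $\psi^{-1}(\{l\})$ is a real-analytic subvariety of $\cCC(M)$.

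I expect the main obstacle to be precisely the real-analytic dependence of the transverse weights $w_i(g)$ on $g$, uniformly near $g_0$. Two points need care. First, one must verify that a single train-track neighborhood $N$ works for all $g$ in a neighborhood of $g_0$: the support of the bending lamination is upper semicontinuous but need not be continuous in the Hausdorff sense, so one needs that for $g$ near $g_0$ the bending lamination is \emph{carried} by $N$ — this uses that the bending measured lamination is continuous in the measured-lamination (weak-$*$) topology as a function of $g$, a fact going back to Thurston/Bonahon, combined with the standard fact that a neighborhood of $l$ in $\cML$ is carried by any sufficiently fine train track carrying $l$. Second, one must check the weights are not merely continuous but real-analytic; the cleanest route is to express $w_i(g)$ as an integral of the bending cocycle of $g$ across a transversal, and to invoke the real-analytic dependence of the bending cocycle on the holonomy representation (Bonahon), noting that the transversal can be taken fixed and transverse to $|l|$, hence transverse to $\psi(g)$ for $g$ near $g_0$. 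A minor additional point is to handle closed leaves of $l$ separately if one prefers: along a closed leaf the weight is just the exterior dihedral angle of the convex core boundary, which is manifestly a real-analytic function of $g$ (it is computed from traces of holonomies and of the bending data), so those components of the weight vector cause no difficulty. Once the analyticity of the $w_i$ is in hand, the rest is the formal observation that preimages of points under real-analytic maps are real-analytic varieties.
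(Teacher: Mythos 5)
Your proposal is essentially sound but takes a genuinely different route from the paper. The paper does not carve out a local train-track chart around each $g_0$; instead it works globally, using Bonahon's shear-bend cocycle. For each boundary component $S=\partial_iM$, one fixes a maximal geodesic lamination $\lambda$ containing $|l_i|$ and invokes Bonahon's Theorem~D (\texttt{bonahon-toulouse}): the map $\rho\mapsto\Gamma_\rho\in\cH(\lambda;\C/2\pi i\Z)$ is a \emph{biholomorphism} from the space of complex projective structures $\cCP_{S}$ onto a cone in $\cH(\lambda;\C/2\pi i\Z)$. The condition ``the convex pleated surface has bending measure $l_i$'' is exactly ``$\Imagin\Gamma_\rho$ equals the transverse cocycle of $l_i$'', which is the preimage of a totally real affine subspace under a biholomorphism, hence a real-analytic subvariety of $\cCP_S$. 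Intersecting with the image of $\cCC(M)\hookrightarrow\cCP_{\partial M}$ (a holomorphic embedding by Ahlfors--Bers and the holonomy theorem) finishes the analyticity; compactness then comes from Lecuire's properness of the bending map. What the paper's formulation buys you: the complex-analytic structure on $\cCP_S$ and the biholomorphism of the shear-bend map do \emph{all} of the analyticity work, with no need to prove directly that train-track weights are real-analytic, and the totally-real-subspace picture replaces any discussion of local carrying by train tracks.

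The one place where your argument, as written, has a real gap is exactly the point you flag as a subtlety but do not resolve. You assert that ``a neighborhood of $l$ in $\cML$ is carried by any sufficiently fine train track carrying $l$.'' This is not quite true for a single track when $l$ lies on a proper face of the weight cone $V(\tau)$ (i.e.\ has zero weight on some branches of a complete track $\tau$): in that case $V(\tau)$ is not a neighborhood of $l$ in $\cML_S$, and nearby bending laminations may be carried by $\tau$ only after a diagonal extension or may require several tracks. One can fix this by covering a neighborhood of $l$ with finitely many complete tracks and doing the analyticity argument on each, or by using a bigon-track / diagonal-extension chart, but you need to say which, since the argument collapses if a single, fixed branch-weight coordinate system fails to cover a neighborhood of $g_0$. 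By contrast, the paper's use of a maximal lamination $\lambda\supset|l_i|$ and Bonahon's $\cR(\lambda)$ sidesteps this: $\cR(\lambda)$ is an open set of representations and the cocycle chart is global on it, so no patching of charts is needed. There is also a minor mismatch with the paper's stated definition of ``real analytic variety,'' which asks for a single globally defined real-analytic function $f_l$ with $\psi^{-1}(\{l\})=f_l^{-1}(\{0\})$; your construction is local, and while compactness lets one assemble a global function (e.g.\ a sum of squares of locally defined equations on a finite cover), you should make that step explicit rather than conclude it from ``real-analyticity is a local property.''
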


Here by ``real-analytic subset'' we mean that, for each $l\in \cML_{\partial M}$ satisfying the hypothesis of Theorem \ref{tm:bonahon-otal}, the subset $\psi^{-1}(\{l\})\subset\cCC_M$ is locally the vanishing set of a real-analytic function valued in a real vector space. The proof of this statement uses Bonahon's shear-bend coordinates on pleated surfaces in the ends of $M$, see Section \ref{sc:analyticity}.

The proof of Lemma \ref{lm:non-contractible} then follows from the fact that compact real analytic spaces have a \emph{fundamental homology class modulo $2$} \cite{borelhaefliger1961classe}. An accessible proof of this fact follows from a result of Sullivan \cite{sullivan:combinatorial} describing the local topology of real-analytic spaces, see Section \ref{sc:non-contractibility}.

\subsection{Further questions}

We do not consider here the ``dual'' question: whether convex co-compact hyperbolic manifolds are uniquely determined by the induced metric on the boundary of their convex core. Our arguments do not, at this point, allow us to conclude that convex co-compact hyperbolic manifolds are {\em infinitesimally rigid} with respect to the measured bending lamination on the boundary of their convex core, that is, that any first-order deformation of a convex co-compact hyperbolic manifold induces a non-zero first-order variation of the bending lamination on the boundary of its convex core. Bonahon \cite{bonahon-toulouse} noticed that this infinitesimal rigidity property is {\em equivalent} to the infinitesimal rigidity with respect to the induced metric on the boundary of the convex core, a statement which would presumably lead to the proof, using existing tools, that convex  co-compact manifolds are uniquely determined by the induced metric on the boundary of their convex core.

Theorem \ref{tm:main} can be considered as a special case of a more general question: whether a convex hyperbolic manifold with boundary can be described in terms of geometric data on its boundary. Two types of data can be considered: the first is the induced metric, considered for polyhedra in $\HH^3$ by Alexandrov \cite{alex}, for convex domains with smooth boundary by Alexandrov and Pogorelov \cite{Po}, for  convex domains in hyperbolic manifolds in \cite{L4,hmcb}. Dually, one can consider a ``bending'' data: for ideal polyhedra in $\HH^3$ it describes the dihedral angles \cite{Andreev-ideal,rivin-annals}, for compact polyhedra their ``dual metric'' \cite{HR}, and for convex domains with smooth boundary in hyperbolic manifolds it corresponds to their third fundamental form \cite{hmcb}, while for convex cores of hyperbolic manifolds it corresponds to the measured bending lamination. Recent results deal with the induced metric \cite{slutskiy:compact,prosanov:polyhedral} and dihedral angles or dual metrics \cite{ideal,prosanov:dual} of convex domains with polyhedral boundary in hyperbolic manifolds. One can also prescribe the induced metric on some boundary components of a convex co-compact manifold and the third fundamental form \cite{short-weyl} or the measured lamination \cite{mesbah:induced} on another. Other results describe the induced metric or third fundamental forms of smooth or polyhedral surfaces in $\HH^3$ invariant under a cocompact action of a Fuchsian group, see e.g. \cite{iie,fillastre2,fillastre3}. 

The problem of prescribing the induced metric (or third fundamental form) of the boundary of a convex hyperbolic manifold can be considered in a ``universal'' setting, where a mostly conjectural picture emerges, see \cite{convexhull,weylgen,weylsurvey}.

The analog of Thurston's question on the bending laminations on the boundary of the convex core was asked by Mess \cite{mess,mess-notes} for ``quasifuchsian'' AdS spacetime. A partial answer was given near the Fuchsian locus \cite{earthquakes}, but neither existence nor uniqueness is known in general, even for rational laminations.

\subsection*{Acknowledgements} We would like to thank Francesco Bonsante, Cyril Lecuire, Filippo Mazzoli and Roman Prosanov for useful discussions related to this paper. The second-named author would like to thank IHES and MPIM Bonn, where part of this work was completed.

\section{Background material}

\subsection{Convex co-compact hyperbolic metrics}

We consider in this paper convex co-compact hyperbolic manifolds, which can be defined as follows: if $M$ is the interior of a compact manifold with boundary, a convex co-compact hyperbolic metric on $M$ is a complete hyperbolic metric $g$ on $M$ such that $(M,g)$ contains a non-empty, compact, geodesically convex subset. (We say that $K\subset M$ is geodesically convex if any geodesic segment in $M$ with endpoints in $K$ is contained in $K$). We always assume $M$ to be oriented.

The space of isotopy classes of convex co-compact hyperbolic metrics on $M$ is denoted by $\cCC_M$. Equivalently, $\cCC_M$ can be described as the space of isotopy classes of \emph{quasi-isometric deformations} (or \emph{quasi-conformal deformations}) of a fixed convex co-compact hyperbolic metric on $M$. The fact that the two definitions coincide follows from Marden's isomorphism theorem \cite[Theorem 8.1]{marden1974geometry}. See also \cite[\S 7.2-3]{canary2004homotopy} and \cite[\S 5.3]{matsuzaki1998hyperbolic}. We will switch between the two descriptions of $\cCC_M$ when convenient.

A convex co-compact hyperbolic manifold $(M,g)$ contains a unique smallest, non-empty, geodesically convex subset $C(M)$, called its convex core. Except in special situations (for Fuchsian manifolds, that is, when $C(M)$ is a totally geodesic surface), $C(M)$ is a three-dimensional domain, with boundary a disjoint union of locally convex, pleated surfaces. Each boundary component of $C(M)$ is then equipped with a hyperbolic metric -- induced by the ambiant hyperbolic metric -- and with a measured lamination, measuring its ``pleating''. See \cite[\S 2]{bonahon-otal}, \cite[Chapter 8]{thurston-notes}.

\subsection{Constant curvature surfaces in hyperbolic ends}

We consider again a convex co-compact hyperbolic manifold $(M,g)$. Since $C(M)$ has locally convex boundary, the exponential Gauss map defines a homeomorphism between the unit normal bundle of $C(M)$ -- the set of unit vectors which are normals of oriented support planes of $C(M)$ at their intersection with $C(M)$ -- and $\partial_\infty M$. Moreover, the geodesic rays defined by those unit normal vectors foliate $M\setminus C(M)$.

It follows that $M\setminus C(M)$ is a disjoint union of ``hyperbolic ends'': non-complete hyperbolic manifolds homeomorphic to $\partial_iM\times [0,+\infty)$, where the $\partial_iM$, $1\leq i\leq n$, are the connected components of $\partial \bar M$, complete on the side corresponding to $+\infty$, and with boundary a concave pleated surface on the side corresponding to $0$. We will denote the boundary at infinity of $E$ by $\partial_\infty E$, and the concave pleated boundary by $\partial_0E$. 

Labourie \cite{L6} proved that, given such a hyperbolic end $E$, it admits a unique foliation by closed surfaces of constant curvature $K$.

\begin{theorem}[Labourie] \label{tm:labourie}
  Let $E$ be a hyperbolic end. It admits a unique foliation by closed surfaces of constant curvature $K$, with $K$ varying from $-1$ close to the concave pleated boundary, to $0$ near the boundary at infinity.
\end{theorem}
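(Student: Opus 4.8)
The plan is to realize the leaves $S_K$, $K\in(-1,0)$, as solutions of a constant Gauss curvature equation by the continuity method, and then to derive monotonicity, exhaustion and uniqueness from the Riccati equation of normal flows together with the comparison principle for tangent convex surfaces. By the Gauss equation, a closed surface $\Sigma\subset E$ of constant curvature $K$ has shape operator $B$ with $\det B=K+1$; for $K\in(-1,0)$ this lies in $(0,1)$, so, since $\Sigma$ is locally convex and separates $\partial_0E$ from $\partial_\infty E$, it is strictly convex, and it is a radial graph $\Sigma=\{\exp_x(u(x)N(x))\}$ over a fixed smooth strictly convex reference surface $\Sigma_0\subset E$ (say an equidistant surface from $\partial_0E$). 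The constant curvature condition then reads $F[u]=K$ for a quasilinear, Monge--Amp\`ere type, elliptic operator $F$ on the closed surface $\Sigma_0$. A short computation from the Riccati equation $\partial_tB=\mathrm{Id}-B^2$ of the equidistant foliation shows that, at a strictly convex solution, the linearization of $F$ along the normal variation $wN$ is $w\mapsto-\det B\,\tr(B^{-1}\Hess w)+c\,w$ with $c=\det B\,(\tr B^{-1}-\tr B)=-K(\kappa_1+\kappa_2)$; since $K<0$ and $B>0$ one has $c>0$, so this is an elliptic operator with positive zeroth order term on a closed surface, hence invertible, and the implicit function theorem gives that the set $R\subset(-1,0)$ of realized curvatures is open.

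To run the continuity method it remains to show $R$ is closed, which is a matter of a priori estimates. The $C^0$ bound comes from barriers: the equidistant surfaces $\Sigma_t$ from $\partial_0E$ foliate $E$, stay convex, and satisfy $B_t\to\mathrm{Id}$ as $t\to\infty$, so for $t$ large the minimum of the curvature of $\Sigma_t$ exceeds $K$; a first-contact argument then confines any constant curvature $K$ surface between $\partial_0E$ and such a $\Sigma_t$. Letting $t$ depend on $K$, the surface tends to $\partial_0E$ as $K\to-1$, and it escapes to $\partial_\infty E$ as $K\to 0$ because its area is pinned between two positive constants by Gauss--Bonnet while a genus $\ge 2$ surface carries no flat metric, so no subsequential limit can remain in a compact part of $E$. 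The decisive estimate is the a priori $C^2$ bound, i.e.\ a bound on the second fundamental form (equivalently on $\tr B$) of a constant curvature $K$ surface in terms of the ambient geometry of $E$ and of the confining region; granting it, Evans--Krylov and Schauder theory upgrade it to uniform $C^\infty$ bounds, so any sequence $K_n\to K_\infty\in(-1,0)$ of realized curvatures subconverges to a non-degenerate closed convex surface of constant curvature $K_\infty$ in the interior of $E$ (non-degenerate since its area is pinned by Gauss--Bonnet and its intrinsic metric converges to one of curvature $K_\infty>-1$, which cannot be the pleated metric of $\partial_0E$). Hence $R=(-1,0)$, and $K\mapsto S_K$ is real-analytic.

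For the monotonicity and uniqueness I would use two geometric facts: (a) if two convex surfaces in $E$ are tangent at a point with one of them on the $\partial_0E$-side of the other, the inner one has the pointwise larger shape operator there; (b) by the Riccati equation, flowing a convex surface with $\det B<1$ outward along its normal strictly increases $\det B$ while keeping it $<1$, so no focal points occur. Given $S_K,S_{K'}$ with $K<K'$: if $S_{K'}$ were not entirely on the $\partial_\infty E$-side of $S_K$, flow $S_{K'}$ outward along its normal until it first becomes weakly so; at the contact point it has been flowed a positive amount, so $\det B>K'+1$ there by (b), while by (a) it is $\le\det B_{S_K}=K+1$, forcing $K'<K$, a contradiction; hence $S_{K'}$ lies strictly on the $\partial_\infty E$-side of $S_K$, and the leaves are pairwise disjoint and strictly monotone in $K$. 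The same argument with $K=K'$ shows that two constant curvature $K$ surfaces are weakly on each other's $\partial_\infty E$-side, hence coincide: this is the uniqueness of $S_K$. Combining strict monotonicity, the barrier behaviour as $K\to-1$ and $K\to 0$, and real-analyticity of $K\mapsto S_K$, the family $\{S_K\}_{K\in(-1,0)}$ depends continuously on $K$ without jumps and exhausts the interior of $E$, so it is a foliation; and since every leaf of any foliation of $E$ by closed surfaces of constant curvature is some $S_K$, this foliation is unique. I expect the main obstacle to be the a priori $C^2$ estimate for constant curvature surfaces in a hyperbolic end: that is the genuinely geometric ingredient, essentially the compactness theory for $k$-surfaces in negatively curved manifolds, whereas the remaining steps are standard continuity-method and comparison arguments.
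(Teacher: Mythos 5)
The paper does not prove Theorem~\ref{tm:labourie}; it cites Labourie's work (reference \cite{L6}) and uses the result as a black box, so there is no proof in the paper against which to compare yours. Your continuity-method sketch is essentially the standard strategy for producing $K$-surface foliations of hyperbolic ends, and the structural steps you outline are sound: the representation as a radial graph over an equidistant reference surface, the computation that the linearization at a constant-curvature solution has strictly positive zeroth-order coefficient $c=-K(\kappa_1+\kappa_2)>0$ (the identity $\det B\,(\operatorname{tr}B^{-1}-\operatorname{tr}B)=(1-\det B)\operatorname{tr}B$ checks out), the comparison claims (a) and (b) for convex surfaces and the outward normal flow, and the consequent monotonicity and uniqueness of the leaves.

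There are two things worth flagging. First, and most importantly, you yourself point out that the decisive step is the a priori $C^2$ bound on the second fundamental form; but this is not a detail one can ``grant'' --- it is the entire content of Labourie's compactness theory for $k$-surfaces, and without it the continuity method does not close. As a proof of Theorem~\ref{tm:labourie} the proposal is therefore incomplete: what you have is a correct reduction of the theorem to the $k$-surface compactness theorem rather than a proof of the theorem. Second, a minor slip in the exhaustion argument: by Gauss--Bonnet the area of $S_K$ equals $2\pi|\chi(\partial_i M)|/|K|$, which is not pinned between two positive constants as $K\to 0$ but blows up; it is precisely this blowup (while the ambient area of any compact region of $E$ is finite) that forces $S_K$ to escape towards $\partial_\infty E$, so the conclusion you want is right but the stated reason is not. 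The ``pinned area'' phrasing is appropriate only for the closedness step at fixed $K_\infty\in(-1,0)$, where you use it correctly to rule out degeneration onto $\partial_0E$.
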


As already mentioned above, the third fundamental form on a surface $S_K$ of constant curvature $K$ in $E$ is a metric of constant curvature $K^*=K/(K+1)$. As a consequence, for each $K\in (-1,0)$, each convex co-compact hyperbolic metric $g$ on $M$ determines a metric of constant curvature $K$ on $\partial \bar M$ -- the induced metric on the surfaces of constant curvature $K$ in the ends of $M$ -- as well as a metric of constant curvature $K^*$ -- the third fundamental form on those surfaces. When $M$ has compressible boundary, the third fundamental form has the additional property that the length of compressible curves is strictly larger than $2\pi$. Denote by $\chT^{K^*}_{\partial M}\subseteq\cT^{K^*}_{\partial M}$ the open domain consisting of metrics having that property.

We denote by $\phi_K:\cCC_M\to \cT^{K}_{\partial M}$ and by $\psi_K:\cCC_M\to \cT^{K^*}_{\partial M}$ the maps obtained in this manner. It follows from the main results of \cite{hmcb} that $\phi_K$ is a homeomorphism (Theorem 0.1 in \cite{hmcb}, together with \cite{L4}) and that $\psi_K$ is a homeomorphism onto its image $\chT^{K^*}_{\partial M}$ \cite[Theorem 0.2]{hmcb}.

\subsection{Duality between hyperbolic ends and de Sitter spacetimes}
\label{ssc:duality}

We also need the well-known polar duality between $\HH^3$ and the de Sitter space $dS^3$, as well as between hyperbolic ends and globally hyperbolic maximal compact de Sitter spacetimes. We recall the main properties here, the reader can consult e.g. \cite{RH,shu,FS19} for more details.

Recall that the hyperbolic space $\HH^3$ can be defined as a connected component of a quadric in the 4-dimensional Minkowski space:
$$ \HH^3 = \{ x\in \R^{3,1}~|~\langle x,x\rangle=-1 \text{ and } x_0>0\}~. $$
In the same manner, one can define the de Sitter space $dS^3$ as:
$$ dS^3 = \{ x\in \R^{3,1}~|~\langle x,x\rangle=1 \}~, $$
equipped with its induced metric. It is a geodesically complete, simply connected Lorentzian space of constant curvature $1$.

Given an oriented totally geodesic plane $P\subset \HH^3$, it is the intersection with $\HH^3$ of a time-like linear hyperplane $H\subset \R^{3,1}$. The unit vector $n^*$ positively orthogonal to $H$ in $\R^{3,1}$ is then a point in $dS^3$. Conversely, given any (unoriented) space-like plane $P^*$ in $dS^3$, it is the intersection with $dS^3$ of a unique space-like linear hyperplane $H^*\subset \R^{3,1}$. The future-oriented unit normal to $H^*$ is then a point $n\in \HH^3$.

It follows from this definition that given two oriented totally geodesic planes $P_1$ and $P_2$ in $\HH^3$, with dual points $P_1^*, P_2^*\in dS^3$:
\begin{itemize}
\item $P_1^*$ and $P_2^*$ are connected by a space-like geodesic segment $s$ if and only if $P_1$ and $P_2$ intersect, and the length of $s$ is then the intersection angle between $P_1$ and $P_2$.
\item $P_1^*$ and $P_2^*$ are connected by a light-like segment if and only if $P_1$ and $P_2$ are disjoint but at distance $0$, and oriented in a compatible way in the sense that the half-space bounded by one is contained in the half-space bounded by the other.
\item $P_1^*$ and $P_2^*$ are connected by a time-like geodesic segment $s$ if and only if $P_1$ and $P_2$ are at finite distance and oriented in a compatible way, and the length of $s$ is then the hyperbolic distance between $P_1$ and $P_2$. 
\end{itemize}

If $S\subset \HH^3$ is an oriented, locally strongly convex surface (that is, its second fundamental form is positive definite at every point), one can define the dual surface $S^*\subset dS^3$ as the set of points in $dS^3$ dual to the tangent planes of $S$. A key property is that $S^*$ is then a space-like, strongly convex surface, and that the induced metric of $S^*$ is equal -- through the identification between $S$ and $S^*$ coming from the duality -- to the third fundamental form of $S$, and conversely. 

Suppose now that $E$ is a hyperbolic end. Its universal cover $\tE$ can be identified isometrically to a domain in $\HH^3$ bounded by a concave pleated surface $\partial_0\tE\subset \HH^3$. The set of points in $dS^3$ dual to the oriented planes bounding half-spaces contained in $\tE$ constitutes a {\em domain of dependence}, denoted by $\tE^*$, in $dS^3$. It is a convex domain bounded by a locally convex surface $\partial_0 \tE^*\subset dS^3$ which is dual to $\partial_0 \tE$. In a sense which can be made precise, $\partial_0 \tE^*$ is everywhere light-like, except along a real tree which is dual to the lift to $\partial_0 \tE$ of the measured bending lamination on the concave pleated surface $\partial_0E$ in $E$.

By construction, $\tE^*$ is invariant under an isometric action of the fundamental group $\Gamma$ of $E$, which is the fundamental group of the boundary component of $M$ corresponding to $E$. The quotient $E^*=\tE^*/\Gamma$ is a globally hyperbolic maximal compact (GHMC) de Sitter spacetime, with initial singularity the quotient by $\Gamma$ of the real tree described above.

Given $K\in (-1,0)$ and a surface $S$ of constant curvature $K$ in $E$, $S$ lifts to a constant curvature $K$ surface $\tS$ in $\tE$, which is locally strongly convex. The dual surface $\tS^*\subset dS^3$ is a space-like, strongly convex surface. It is also invariant under $\Gamma$, and the quotient $S^*$ is a Cauchy surface in $E^*$. Moreover, as already mentioned, $S^*$ has induced metric equal (through the identification between $S$ and $S^*$ through the duality map) to the third fundamental form of $S$, and its curvature is constant and equal to $K^*=K/(K+1)$.

In this manner, the surfaces dual in $E^*$ to the closed surfaces of constant curvature $K\in (-1,0)$ which foliate $E$ define a foliation of $E^*$ by closed surface of constant curvature $K^*\in (-\infty, -1)$. The existence and uniqueness of this foliation was proved in \cite{BBZ2}.
 
\subsection{The third fundamental form and bending lamination}

As mentioned above, we will need a slight refinement of the following statement, similar to (slightly more elaborate) results obtained in \cite[Section 6]{cyclic}.

\begin{lemma} \label{lm:III}
  Let $g$ be a fixed convex co-compact hyperbolic metric on $M$. Then $\lim_{K\to -1}\psi_K(g)=\psi(g)$ in the topology of the marked length spectrum, that is, for any closed curve $c$ on $\partial \bar M$, $\lim_{K\to -1}L_{\III_K}(c) = i(l, c)$, where $l$ is the measured pleating lamination on the boundary of the convex core of $M$.
\end{lemma}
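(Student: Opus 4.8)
The plan is to work in the universal cover of a single hyperbolic end $E$ associated with a boundary component of $M$, and to compare the constant curvature $K$ surfaces $\tS_K$ with the concave pleated boundary $\partial_0\tE$ as $K\to -1$. By Labourie's theorem (Theorem \ref{tm:labourie}) the surfaces $S_K$ foliate $E$ and converge to $\partial_0E$ as $K\to -1$; the content of the lemma is that this convergence is strong enough to control the third fundamental form $\III_K$, which converges not to the (degenerate) third fundamental form of the pleated surface but to the transverse measure of the bending lamination $l$. I would first recall, from Section \ref{ssc:duality}, that $\III_K$ is isometric to the induced metric $I^*_K$ on the dual Cauchy surface $\tS_K^*$ in the de Sitter end $\tE^*$, and that these dual surfaces converge to the locally convex surface $\partial_0\tE^*$, which is light-like away from the real tree $T$ dual to the lift of $l$. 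Thus the statement is equivalent to: for any closed curve $c$, the length of the geodesic representative of $c$ for $I^*_K$ converges to the length of the corresponding geodesic on $(\partial_0\tE^*/\Gamma, I^*)$, i.e. to $i(l,c)$, since lengths on the real tree $T$ compute intersection numbers with $l$.

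The key steps, in order, would be: (1) fix a closed curve $c$ on $\partial\bar M$ and realize it as a $\Gamma$-invariant family of geodesic arcs; for the pleated surface $\partial_0E$, the $l$-geodesic representative crosses the strata of the bending lamination and its "length" measured against the bending is exactly $i(l,c)$ (this is the standard interpretation of the bending cocycle). (2) On the de Sitter side, show that the dual surfaces $S_K^*$ converge to $\partial_0\tE^*$ in the $C^0$ sense, uniformly on compact sets, using the foliation result of \cite{BBZ2} together with the explicit control of the duality map: a point of $S_K$ at hyperbolic distance $r_K(x)$ from $\partial_0 E$ with principal curvatures determined by $K$ corresponds, under duality, to a point of $S_K^*$ whose position and induced metric degenerate in a controlled way as $r_K\to 0$. (3) Estimate the induced metric $I^*_K$ on $S_K^*$: away from the bending locus the surface $S_K$ is nearly flat and nearly tangent to a fixed totally geodesic plane, so $\III_K$ is small there; near a bending line (a leaf of $l$), the contribution to $\int_c \III_K^{1/2}$ is close to the exterior dihedral angle along that leaf, i.e. its weight. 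Summing these contributions and passing to the limit gives $i(l,c)$. (4) Upgrade pointwise convergence of metrics to convergence of lengths of geodesic representatives: this is automatic once one has the right two-sided bounds, using that the curve $c$ is fixed and the $I^*_K$ are uniformly bi-Lipschitz to a fixed reference metric on compact sets away from the bending locus and that the bending locus has a uniform neighbourhood on which the metric is controlled from above.

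The main obstacle I expect is step (3)–(4): controlling the induced metric of the dual surface $S_K^*$ \emph{uniformly} as $K\to -1$, simultaneously in the "flat" region (where one must show $\III_K\to 0$ fast enough that it contributes nothing, despite the surface having small but nonzero curvature) and in the "bent" region (where the surface picks up almost all of its length, and one must show the integrated contribution converges precisely to the weight of the leaf, with no loss or excess). The delicate point is that as $K\to-1$ the surface $S_K$ has principal curvatures blowing up, so its second and third fundamental forms degenerate in opposite directions, and one must track how the geodesic representative of $c$ redistributes its length between the two regimes. This is exactly the kind of estimate carried out in \cite[Section 6]{cyclic}, and the "slight refinement" alluded to in the statement presumably consists in making the convergence locally uniform in $g$, which is needed for Lemma \ref{lm:convergence}; I would obtain that by noting all the estimates above depend continuously on the hyperbolic end $E$, hence on $g\in\cCC(M)$, and are uniform on compact subsets.
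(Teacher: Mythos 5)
Your proposal takes a genuinely different route from the paper. The paper's proof of Lemma~\ref{lm:III} is essentially a citation: via the duality of Section~\ref{ssc:duality}, $\III_K$ is identified with the induced metric on the dual Cauchy surface $S_K^*$ in the GHMC de Sitter spacetime $E^*$, and the convergence of marked length spectra of Cauchy surfaces towards the intersection with the lamination at the initial singularity is precisely the main result of~\cite{belraouti:asymptotic}. No direct estimate of $\III_K$ on $S_K$ is attempted, and in particular no decomposition of $S_K$ into ``flat'' and ``bent'' regions appears. Even the refined uniform version, Lemma~\ref{lm:convergence2}, is not obtained by such a split but by a two-sided squeeze: the distance from $S_K$ to $\partial_0 E$ is shown to be at most $\Delta(K)\to 0$ by a Gauss--Bonnet/area comparison, so that $S_K^*$ lies in the past of the cosmological-time surface $\Sigma_{\Delta(K)}$ in $E^*$; the expansion property of convex foliations then gives $i(l,c)\le L_{\III_K}(c)\le L_{g_{\Delta(K)}}(c)$, and~\cite{belraouti:asymptotic} (which expresses $g_\tau$ via grafting along $l$) forces the upper bound to converge to $i(l,c)$.

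The concrete gap in your proposal is step~(3). The dichotomy into ``flat'' regions (where $\III_K$ is small) and ``bent'' regions (where it accumulates the weight of a leaf) only makes literal sense when $l$ is rational. For a general $l\in\cML_{\partial M}$ the support is typically a lamination that accumulates on itself; there is no neighbourhood of the support in which the bending is concentrated on finitely many leaves, and ``flat'' and ``bent'' scales are interleaved down to arbitrarily small scale. You cannot localize the contribution to $\int_c \III_K^{1/2}$ leaf by leaf, nor can you pass through a rational approximation without an a priori continuity/monotonicity estimate relating $\III_K$, $l$ and $c$ uniformly in $K$ --- which is exactly the kind of global comparison inequality the paper borrows from the convex foliation on the de Sitter side. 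A secondary issue is the phrasing of step~(2): the dual surfaces $S_K^*$ converge to the light-like boundary $\partial_0\tE^*$, whose induced ``metric'' is degenerate outside the real tree, so there is no $C^0$ Riemannian limit and one cannot simply ``upgrade pointwise convergence of metrics'' as in step~(4); turning that degeneration into convergence of marked length spectra is the entire content of the lemma, and is precisely what~\cite{belraouti:asymptotic} supplies.
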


\begin{proof}
  The proof follows directly, through the duality between hyperbolic and de Sitter space, from the main result in \cite{belraouti:asymptotic}, see below.
\end{proof}

The refinement we need is that the convergence is actually uniform on compact subsets of $\cCC_M$. While the proof is basically the same as for Lemma \ref{lm:III}, it is necessary to keep track more carefully of the convergence.

\begin{lemma} \label{lm:convergence2}
  Let $c$ be a closed curve on $\partial \bar M$. Then $L_{\III_K}(c)\to i(l,c)$ uniformly on compact subsets of $\cCC_M$ as $K\to -1$.
\end{lemma}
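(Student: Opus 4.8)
The plan is to upgrade the pointwise convergence of Lemma \ref{lm:III} to uniform convergence on compact subsets of $\cCC(M)$ by revisiting the argument, which passes through duality to de Sitter spacetimes and Belraouti's asymptotic result, and checking that all the estimates involved can be made uniform. Fix a compact set $\cK\subset\cCC(M)$ and a closed curve $c$ on $\partial\bar M$. For $g\in\cK$, let $E_g$ be the corresponding hyperbolic end containing the boundary component $\partial_iM$ carrying $c$, and let $E_g^*$ be the dual GHMC de Sitter spacetime, foliated by the constant curvature $K^*$ surfaces $S_{K^*}^*$ whose induced metrics are the $\III_K$. The quantity $L_{\III_K}(c)$ is the length, in the $K^*$-rescaled metric $|K^*|^{-1}\III_K$ — or equivalently, after the rescaling convention fixed in the paper, in $\cT_{\partial M}$ — of the geodesic representative of $c$.

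\textbf{Key steps.} First I would set up a uniform geometric model: for $g\in\cK$, the pleated boundary $\partial_0E_g$ and the bending lamination $l(g)$ vary continuously (indeed, by the later analyticity results, analytically) with $g$, so on the compact set $\cK$ the relevant geometry is uniformly controlled — e.g. the injectivity radius of the pleated surfaces, the total mass $i(l(g),c)$, and the diameter of the convex core are bounded uniformly. Second, I would recall the mechanism behind Belraouti's theorem \cite{belraouti:asymptotic}: as $K^*\to-\infty$ the level surfaces $S_{K^*}^*$ converge, after rescaling, to the initial singularity of $E_g^*$, which is the real tree dual to $\tilde l(g)$; the length of $c$ on $S_{K^*}^*$ converges to the translation length of $c$ on that real tree, which is exactly $i(l(g),c)$. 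The point is to extract an explicit rate: one shows that for a fixed hyperbolic structure there is a bound of the form $|L_{\III_K}(c)-i(l(g),c)|\le \omega(|K+1|)$ where $\omega$ is a modulus of continuity that can be expressed through the geometric quantities above (injectivity radius lower bound, convex core diameter upper bound, a bound on $i(l(g),c)$). Third, using the uniform control from the first step, one checks that these geometric quantities stay in a fixed compact range as $g$ ranges over $\cK$, so the same modulus of continuity $\omega$ works simultaneously for all $g\in\cK$. This yields $\sup_{g\in\cK}|L_{\III_K}(c)-i(l(g),c)|\le\omega(|K+1|)\to 0$, which is the claim.

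\textbf{Main obstacle.} The hard part will be the second step: Belraouti's result is stated as a pointwise (per-spacetime) asymptotic statement, so I would need to either re-examine its proof to locate an effective rate, or — more robustly — give a self-contained comparison argument. For the latter I would use convexity: the constant-curvature-$K^*$ Cauchy surface $S_{K^*}^*$ sits between the initial singularity (the real tree $T_g$) and any fixed later Cauchy surface, and a length-comparison between nested convex surfaces in the de Sitter spacetime, combined with a control on how fast $S_{K^*}^*$ sweeps down toward $T_g$ as $K^*\to-\infty$ (this is where the curvature bound $K^*=K/(K+1)$ and estimates on the second fundamental form of the $K$-surfaces enter), gives two-sided bounds $i(l(g),c)-\epsilon_1(K)\le L_{\III_K}(c)\le i(l(g),c)+\epsilon_2(K)$ with $\epsilon_j$ depending only on the uniform geometric data. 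A secondary technical point is to make sure the rescaling conventions (the identification $\cT_{\partial M}^{K^*}\cong\cT_{\partial M}$) do not introduce a $K$-dependent factor that destroys uniformity; since the rescaling factor $|K^*|^{-1}\to 1$ as $K\to-1$ and the convergence is uniform in that parameter alone, this is harmless, but it should be stated explicitly.
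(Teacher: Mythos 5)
Your proposal takes essentially the same route as the paper: dualize to the de Sitter spacetime, sandwich $L_{\III_K}(c)$ between $i(l,c)$ from below and the length on a convex surface that stays at controlled distance from the initial singularity from above, and control the ``sweep rate'' as $K\to-1$. Where you are vague, the paper is concretely simpler than you anticipate, in two respects.

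First, the sweep-rate bound does not need second fundamental form estimates, injectivity radius bounds, or convex core diameter bounds. The paper bounds the distance $d$ from $S_K$ to the pleated boundary $\partial_0E$ by comparing areas: the nearest-point projection $S_K\to\partial_0E$ contracts by at least $\cosh d$, while a disk of radius $r$ in $S_K$ has area $\pi(\cosh(r/\sqrt{|K|})-1)$, and the Gauss--Bonnet theorem forces $\area(S_K)=\area(\partial_0E)/|K|$. Combining these yields a bound $d\le\Delta(K)$ with $\Delta(-1)=0$ that depends only on $\area(\partial_0E)$, hence only on the topology of the boundary --- so it is in fact uniform over \emph{all} of $\cCC(M)$, not merely on compact subsets. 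Your ``uniform geometric model'' step is therefore unnecessary here. Second, for the upper half of the sandwich, the comparison surface is not the $K^*$-foliation itself but the constant cosmological time level $\Sigma_{\Delta(K)}$; the expanding-projection property of convex foliations (\cite[Prop.~3.1]{belraouti:asymptotic}) gives $L_{\III_K}(c)\le L_{g_{\Delta(K)}}(c)$ and $i(l,c)\le L_{\III_K}(c)$, and the uniformity (on compact subsets) of the final convergence $L_{g_\tau}(c)\to i(l,c)$ as $\tau\to0$ is extracted not from an effective remake of Belraouti's proof but from the explicit description of $g_\tau$ as a grafting along $l$. So the one place the compact set $\cK$ actually enters is this last grafting step, not the height bound.
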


\begin{proof}
  Notice first that if $E$ is an end of a quasifuchsian manifold and $S$ is a convex surface in $S$, then the gradient on $S$ of the distance $d:E\to \R_{\geq 0}$ to the concave pleated boundary of $E$ is bounded by $1$.

  As a consequence, there exists $K_0>-1$ and a continuous function $\Delta:[-1,K_0]\to \R_{\geq 0}$, with $\Delta(-1)=0$, such that, for $K\in [-1,K_0]$, the restriction of $d$ to the surface $S_K$ is bounded above by $\Delta(K)$. Indeed, if $d(x)\geq d_0$ at a point $x\in S_K$, then $d\geq d_0/2$ on a ball of radius $d_0/2$ around $x$ in $S_K$. Since the projection from $S_K$ to $\partial_0E$ is contracting by a factor at least $\cosh(d)$, and since the ball of radius $r$ in $S_K$ has area equal to $\pi(\cosh(r/\sqrt{|K|})-1)$, it would follow that
  $$ \area(S_K)\geq \area(\partial_0E)+(\cosh(d_0/2)^2-1)\pi(\cosh(d_0/2\sqrt{|K|})-1)~. $$
  However it follows from the Gauss-Bonnet Theorem that $\area(S_K)=\area(\partial_0E)/|K|$, leading to an upper bound on $d_0$ when $K$ is close to $-1$.

  It then follows that, for any $x\in S_K$, the tangent plane $T_xS_K$ to $S_K$ at $x$ is at distance at most $\Delta(K)$ from any support plane of $\partial_0E$, because any support plane of $\partial_0E$ separates $x$ from $\partial_0E$, so that the distance from $T_xS_K$ to the support plane is at most equal to the distance from $x$ to $\partial_0E$. (Here we consider that two hyperbolic planes are at distance $0$ if they intersect.) Through the duality between $E$ and $E^*$, this means that any point of $S_K^*$ is at time distance at most $\Delta(K)$ from the initial singularity $\partial_0E^*$ of $E^*$. (In other terms, given $x^*\in S^*_K$, the maximal time distance from $x^*$ to $\partial_0E^*$ is at most $d_0$.) 

  For the last step of the argument, we introduce the foliation $(\Sigma_\tau)_{\tau\in \R_{>0}}$ of $E^*$ by surfaces of constant cosmological time, see \cite{belraouti:asymptotic}. Recall that the cosmological time is the time distance to the initial singularity of $E^*$. It follows from the previous argument that $S^*_K$ is in the past of $\Sigma_{\Delta(K)}$ for $K$ small enough.

  A key property of convex foliations in Lorentzian spacetimes (see \cite[Prop. 3.1]{belraouti:asymptotic}) is that given a leaf of such a foliation, for instance $\Sigma_\tau$ for some $\tau>0$, the projection along the normal lines from the past of $\Sigma_\tau$ is expanding. As a consequence, if we denote by $g_\tau$ the induced metric on $\Sigma_\tau$, we have
  $$ L_{\III_K}(c) \leq L_{g_{\Delta(K)}}(c)~, $$
  while the same argument applied to the projection on $S_K$ along the normal lines of the foliation by $K$-surfaces indicates that
  $$ i(l,c) \leq L_{\III_K}(c)~. $$

  The result therefore follows from the fact that $L_{g_\tau}(c)\to i(l,c)$ uniformly on compact subsets of $\cCC_M$, since the metric $g_\tau$ can be expressed in terms of grafting along the measured lamination $l$, see \cite{belraouti:asymptotic}.
\end{proof}

\begin{proof}[Proof of Lemma \ref{lm:convergence}]
  The lemma is a restated version, in terms of the functions $\psi_K$ and $\psi$, of Lemma \ref{lm:convergence2}.
\end{proof}

\section{Analyticity}
\label{sc:analyticity}

In this section we prove the following lemma, which will be used below in two different manners: once to show that the space of hyperbolic structures with a given measured bending lamination is contractible, and another time to prove that it cannot be contractible, unless it is a point. 

For simplicity, for $l\in \cMLrea$, we denote by $\cCC_M(l)=\psi^{-1}(\{ l\})$ the set of convex co-compact hyperbolic structures on $M$ for which the measured bending lamination on the boundary of $C(M)$ is $l$. Theorem \ref{tm:main} is equivalent to the statement that, for all $l\in \cMLrea$, $\cCC_M(l)$ is reduced to one point.

\begin{lemma} \label{lm:analytic}
  Let $l\in \cMLrea$. Then $\cCC_M(l)$ is a compact, analytic subset in $\cCC_M$.
\end{lemma}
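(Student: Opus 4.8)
The plan is to produce, for each $l\in\cMLrea$, an explicit real analytic function $f_l$ on $\cCC(M)$ whose zero set is exactly $\cCC_l(M)$, and then separately establish compactness. The natural source of coordinates is Bonahon's shear-bend parametrization of pleated surfaces in the hyperbolic ends of $M$: fix a boundary component $\partial_iM$ of genus $g_i\geq 2$; the boundary of the convex core facing $\partial_iM$ is a pleated surface whose pleating lamination, when $g\in\cCC_l(M)$, is the restriction $l_i$ of $l$. First I would reduce to the case where the support of $l_i$ is contained in a maximal geodesic lamination $\lambda_i$ (extending $\supp l_i$ to a triangulated lamination in the usual way), so that the hyperbolic structure on $\partial_0E_i$ together with its bending is encoded by Bonahon's shear-bend cocycle, a $\C/2\pi i\Z$-valued (or $\R$-valued on the leaves of $\supp l_i$) transverse cocycle whose imaginary part along $\supp l_i$ is prescribed to be $l_i$. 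The key point is that the holonomy representation $\rho_g:\pi_1(\partial_iM)\to\PSL(2,\C)$ of this pleated surface, viewed inside the quasifuchsian representation of $\pi_1(M)$, depends \emph{real analytically} on the shear-bend cocycle, and the bending part of that cocycle is, by definition, the measured lamination $\psi(g)$ restricted to $\partial_iM$.

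The heart of the argument is then to realize $\cCC_l(M)$ as a zero set. I would use the following: a point $g\in\cCC(M)$ lies in $\cCC_l(M)$ if and only if, for each $i$, there exists a pleated surface in the end $E_i$ homotopic to $\partial_iM$ whose bending lamination is $l_i$ and which bounds the convex core --- equivalently, whose holonomy is conjugate (inside $\PSL(2,\C)$, by the inclusion $\pi_1(\partial_iM)\hookrightarrow\pi_1(M)$) to $\rho_g|_{\pi_1(\partial_iM)}$. Concretely, the shear part of the shear-bend cocycle is a free real analytic parameter (an element of a finite-dimensional real vector space $H^i$, Bonahon's space of transverse cocycles on $\lambda_i$ supported off $\supp l_i$, together with shears along $\supp l_i$), the bending part is fixed to $l_i$, and sending this shear parameter through the holonomy map and then comparing with $\rho_g$ gives a real analytic map. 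Quotienting by conjugation and by the (finite-dimensional, analytic) ambiguity in the choice of $\lambda_i$ and marking, one obtains a real analytic map $F:\cCC(M)\times\prod_i H^i\to\prod_i\chi(\partial_iM)$ (character variety of $\PSL(2,\C)$-representations), and $\cCC_l(M)$ is the image under projection of $F^{-1}(\{0\})$ of the \emph{difference} of the two sides; more cleanly, since for fixed $g$ the pleated surface realizing $l_i$ in its homotopy class bounding the core is unique when it exists (the convex core boundary is canonical), one directly gets $\cCC_l(M)=f_l^{-1}(\{0\})$ where $f_l(g)$ measures the failure of the canonical $l_i$-bent pleated representation to close up inside $\rho_g$. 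I would make this precise using the real analyticity of $\cCC(M)\cong\cT_{\partial M}$, of the holonomy of convex cores, and of Bonahon's coordinates.

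For compactness: $\cCC_l(M)$ is closed in $\cCC(M)$ since $\psi$ is continuous (indeed $\psi$ is a pointwise limit of the continuous $\psi_K$ by Lemma \ref{lm:convergence}, and in fact continuity of $\psi$ in the measured lamination topology is classical), so it suffices to show $\cCC_l(M)$ is bounded in $\cCC(M)\cong\cT_{\partial M}$. This is where I would invoke the hypotheses in Theorem \ref{tm:bonahon-otal}: the doubly-degenerate and degenerate ends are excluded, and the condition $i(l,\partial A)>0$ on essential annuli (or the interval-bundle condition) is exactly what Bonahon--Otal use to get \emph{a priori} bounds. More directly, if $g_n\in\cCC_l(M)$ leaves every compact set, then the conformal structures at infinity $c_n=\phi(g_n)$ degenerate, so some curve $\gamma$ on $\partial_iM$ has $\ell_{c_n}(\gamma)\to 0$; since the convex core boundary has bounded area (Gauss--Bonnet) and bounded diameter relations with the conformal boundary fail, one extracts a short geodesic or a pinched annulus forcing $i(l,\gamma)=0$, contradicting the hypothesis unless $\bar M$ is an interval bundle, where condition (3) gives the same contradiction via $i(l,p^*l')>0$. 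I would cite the relevant compactness statement from \cite{bonahon-otal} (or re-derive it from the bounded-length/bounded-intersection estimates there) rather than redo it.

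The main obstacle I expect is making the holonomy-via-shear-bend-coordinates map honestly real analytic \emph{as a function on $\cCC(M)$}, i.e.\ handling the dependence on the choice of the maximal lamination $\lambda_i$ extending $\supp l_i$ and the train-track neighborhoods, and ensuring the description of $\cCC_l(M)$ as a single zero set $f_l^{-1}(\{0\})$ is independent of these auxiliary choices; one must either work with a fixed $\lambda_i$ and argue the resulting analytic structure is canonical, or cover $\cCC(M)$ by opens on which different choices are valid and check the zero sets patch. A subsidiary difficulty is that when $\supp l_i$ is not itself maximal, the shear coordinates along the complementary triangles are genuine free parameters, so the ``canonical pleated surface bounding the core'' must be pinned down by a convexity/support-plane condition, which I would phrase as an additional analytic (indeed, semi-analytic, but the relevant stratum is analytic) constraint. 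Once these bookkeeping issues are settled, analyticity follows from the real analyticity of the $\PSL(2,\C)$ trace functions in Bonahon's coordinates, and compactness from Bonahon--Otal, giving the lemma.
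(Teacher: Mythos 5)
You have correctly identified the key tool (Bonahon's shear--bend cocycle) and the key compactness input (Bonahon--Otal / Lecuire's properness), but the specific route you propose for the analyticity part runs into difficulties that the paper sidesteps, and that you yourself flag without resolving. Your plan is to construct an explicit function $f_l$ on $\cCC(M)$ measuring the ``failure of the canonical $l_i$-bent pleated representation to close up inside $\rho_g$,'' which requires (i) choosing, analytically in $g$, shear coordinates for the pleated boundary surface, (ii) passing to a quotient by conjugation and by the auxiliary choice of maximal lamination $\lambda_i$, and (iii) recognizing that you are describing $\cCC_l(M)$ as the \emph{image} of a zero set under a projection, not directly as a zero set --- which in general produces only a semi-analytic set. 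Your fallback (``the pleated surface bounding the core is unique when it exists, so one directly gets $\cCC_l(M)=f_l^{-1}(0)$'') is close to circular: the uniqueness of the locally convex pleated surface with given bending lamination is what you'd like to parametrize analytically, and you note yourself that the ``pinned down by a convexity/support-plane condition'' step threatens to give only a stratum of a semi-analytic set.

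The paper avoids all of this bookkeeping by working one level up, in the space $\cCP_{\partial M}$ of complex projective structures rather than with holonomy representations directly. Since $\cCC(M)$ embeds as a complex submanifold of $\cCP_{\partial M}$, and Bonahon's Theorem D gives a \emph{biholomorphism} from $\cCP_{\partial_i M}$ onto a cone in the complex vector space $\cH(\lambda_i;\C/2\pi i\Z)$, prescribing the bending part of the cocycle to equal $l_i$ (and to vanish off $\supp l_i$) is a totally real affine condition in that vector space. Hence $\cCP_{\partial_i M}(l_i)$ is, by a single pullback, a real analytic subvariety of $\cCP_{\partial_i M}$, and $\cCC_l(M)=\cCC(M)\cap\bigl(\cCP_{\partial_1 M}(l_1)\times\cdots\times\cCP_{\partial_n M}(l_n)\bigr)$ is a real analytic subvariety of the complex manifold $\cCC(M)$. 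There is no projection, no quotient, and no choice of analytic section to worry about; the independence of the auxiliary choice of $\lambda_i$ is absorbed into the statement ``inverse image of a totally real subspace under a biholomorphism.'' On the compactness side your argument is fine in spirit, though the paper invokes Lecuire's properness of the bending map (with Bonahon--Otal's Closing Lemma as a likely alternative) rather than re-deriving bounds from degenerating conformal structures; the sketch you give of the latter (``bounded area and bounded diameter relations fail...'') is too loose to stand on its own but is not needed once the reference is in place. I'd recommend recasting the analyticity argument in the $\cCP_{\partial M}$ framework to eliminate the obstacles you list at the end.
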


Here by ``analytic subset'' we mean that $\cCC_M(l)$ is locally the vanishing set of a real-analytic function.

\begin{proof}
  The analytic structure of $\cCC_M(l)$ essentially follows from Bonahon's construction of the shear-bend cocycle \cite{bonahon-toulouse}.

  Let $\partial_1M, \partial_2M, \ldots, \partial_nM$ be the connected components of $\partial \bar M$, which are closed, oriented surfaces of genus at least $2$. For each $i\in \{ 1,\ldots, n\}$, we denote by $\cCP_{\partial_iM}$ the space of complex projective structures on $\partial_iM$, and by $\cCP_{\partial M}=\prod_{i=1}^n\cCP_{\partial_iM}$ the space of complex projective structures on $\partial \bar M$. For each $i$, $\cCP_{\partial_iM}$ is a complex manifold, see e.g. \cite{dumas-survey}, and so is the product $\cCP_{\partial M}$. Moreover, $\cCC_M$ embeds in $\cCP_{\partial M}$ as a complex submanifold (this follows from the definition of the complex structure on $\cCC_M$ and from the fact that the holonomy map of a complex projective structure is a local biholomorphism, see e.g. \cite[\S 5.2]{dumas-survey}).

  Bonahon \cite{bonahon-toulouse} considers, in a closed, orientable surface $S$, a maximal lamination $\lambda$. He defines an open subset $\cR(\lambda)$ of the space $\cR_S$ of representations of $\pi_1S$ in $PSL(2,\C)$, consisting of those $\rho\in\cR_S$ for which there exists a $\rho$-invariant pleated surface with pleating locus contained in $\lambda$. (Such representations are said to \emph{realize} $\lambda$, see \cite[\S I.5.3]{canaryEpsteinMarden2006fundamentals}).
  
  For each $\rho\in \cR(\lambda)$, Bonahon defines a {\em shear-bend} cocycle $\Gamma_\rho\in\cH(\lambda;\C/2\pi i\Z)$ supported on $\lambda$. He shows \cite[Theorem D]{bonahon-toulouse} that this map $\rho\to \Gamma_\rho$ induces a biholomorphic homeomorphism from $\cR(\lambda)$ to a certain cone in the complex vector space $\cH(\lambda; \C/2\pi i\Z)$.

  The construction of this map goes through pleated surfaces in the hyperbolic manifold corresponding to a complex projective structure $\rho$, and the imaginary part $i\beta_\rho$ of $\Gamma_\rho$ corresponds precisely to the bending cocycle along $\lambda$ for this pleated surface. In particular, $\beta_\rho$ is positive if and only if the pleated surface corresponding to $\lambda$ is locally convex.

  We are now ready to prove that $\cCC_M(l)$ is locally the vanishing set of real-analytic functions. Let $g\in\cCC_M(l)$ be a convex co-compact metric on $M$ that realizes $l$ as its bending lamination, i.e.\ $\psi(g)=l$.
  
  For each $i$, let $l_i\coloneqq l\restr{\partial_i M}$ be the bending lamination on the component $\partial_i M$. Pick a maximal geodesic lamination $\lambda_i$ on $\partial_i M$ containing the support of $l_i$. Denote by $Z_i$ the complex projective structure induced by $g$ on $\partial_iM$. The objects described in the above paragraphs fit together as follows:
  \begin{equation}\label{eq:local_shearbend}
    \cCP_{\partial_iM}\xrightarrow{\hol}\cR_{\partial_iM}\supseteq \cR(\lambda_i)\xrightarrow{\Gamma}\cH(\lambda_i; \C/2\pi i\Z)\xrightarrow{\Imagin}\cH(\lambda_i,\R/2\pi\Z),
  \end{equation}
  where the holonomy map $\hol$ is a local biholomorphism \cite[\S 5.2]{dumas-survey}, the inclusion $\supseteq$ is open, $\Gamma$ is a biholomophic homeomorphism and the \emph{imaginary part} map $\Imagin$ is real-analytic. Note that the composition $\Imagin\circ\Gamma$ is the bending cocycle along $\lambda_i$ and the image $\beta_i\coloneqq\beta(\hol(Z_i))$ corresponds precisely to the bending lamination $l_i$. From (\ref{eq:local_shearbend}), there is a neighborhood $U_i\subset\cCP_{\partial_iM}$ of $Z_i$ on which the composition (\ref{eq:local_shearbend}) provides a real-analytic map $B_i\colon U_i\rightarrow\cH(\lambda_i,\R/2\pi\Z)$. In particular $B_i^{-1}(\beta_i)$ is a real-analytic subset of $U_i$ that contains $Z_i$.

  All in all, we obtain a neighborhood $U\coloneqq U_1\times\cdots\times U_n$ of $g$ in $\cCP_{\partial M}$ in which the fibre $\cCC_M(l)\cap U$ is exactly given by the intersection of the complex submanifold $\cCC_M$ with the real-analytic subsets $B_i^{-1}(\beta_i)$. This proves that $\cCC_M(l)$ is a real-analytic subset of $\cCC_M$.

  The compactness of $\cCC_M(l)$ is an immediate consequence, in the more general case of geometrically finite hyperbolic manifolds, of the main result from \cite{lecuire:properness}, the properness of the bending map. (We believe that it also follows, in the convex co-compact case, from the arguments used by Bonahon and Otal to prove their Closing Lemma \cite[Prop. 8]{bonahon-otal}.) 
\end{proof}

It can be noted that one needs to use locally defined functions only in order to have an argument that applies also to manifolds with compressible boundary, since in this case some maximal laminations on the boundary do not contain the pleating locus of a pleated surface for some representations.

\section{Contractibility}

In this section we will show Lemma \ref{lm:contractible}. According to Lemma \ref{lm:III}, for a fixed convex co-compact metric on $M$, as $K\to -1$, the third fundamental forms $\III_K$ on the $K$-surfaces in the ends of $M$ converge, in the topology of the marked length spectrum, to the bending lamination. This can be interpreted heuristically as a sort of convergence of the homeomorphisms $\psi_K$ defined above to $\psi$, and our goal is indeed to show that $\psi$ is a limit of homeomorphisms. So an additional construction is necessary to identify in some way the spaces $\cT^{K^*}_{S}$ of metrics of constant curvature $K^*=K/(K+1)$, as $K\to -1$, with $\cML_S$. 

We provide one argument using earthquakes. A second possible approach would be to use geodesic currents as the natural framework in which identifying the spaces $\cT^{K^*}_{S}$ and $\cML_S$. Yet another possible approach might follow the arguments appearing in \cite[Prop. 4.4]{mazzoli2019-1}.

\subsection{Earthquakes}\label{sc:earthquakes}

In this section, we fix a closed surface $S$, which will be a connected component of $\partial \bar M$. 

We consider in this section a fixed metric $m_0\in \cT_S$. We then consider the left earthquake map based at $m_0$, $E_L^{m_0}:\cML_S\to \cT_S$. Recall that $E_L^{m_0}$ is a homeomorphism, by Thurston's Earthquake Theorem \cite{thurston-earthquakes}.

To simplify notations, we denote by $\cT^{K^*}_S$ the space of metrics of constant curvature $K^*=K/(K+1)$ on $S$ considered up to isotopy. This space $\cT^{K^*}_S$ is canonically identified with $\cT_S$, because if $m$ is a hyperbolic metric on $S$, then $(1/|K^*|)m$ is a metric of constant curvature $K^*$. We use this notation since the third fundamental form on a surface of constant curvature $K$ in $M$ has constant curvature $K^*$, see \cite{L5}.

\begin{definition}
  We denote by $u_K:\cT^{K^*}_S\to \cML_S$ the function defined as follows. Let $h\in \cT^{K^*}_S$ be a metric of constant curvature $K^*$. Let $m=|K^*|h$ be the hyperbolic metric homothetic to $h$. We set
  $$ u_K(h) = (1/\sqrt{|K^*|})(E_L^{m_0})^{-1}(m)~. $$
\end{definition}

\begin{lemma} \label{lm:earthquakes}
  \begin{enumerate}
  \item For all $K\in (-1,0)$, $u_K$ is a homeomorphism.
  \item Let $(K_n)_{n\in N}$ be a sequence in $(-1,0)$ converging to $-1$, and let $(h_n)_{n\in \N}$ be a sequence of metrics with $h_n\in \cT^{K_{n}^*}_S$ such that $h_n\to l\in \cML$ in the topology of the marked length spectrum. Then $u_{K_n}(h_n)\to l$.
  \end{enumerate}
\end{lemma}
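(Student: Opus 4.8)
The plan is to prove the two statements separately, using the fact that left earthquakes interact nicely with both the marked length spectrum and rescaling.

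\textbf{Part (1): $u_K$ is a homeomorphism.} I would build $u_K$ as a composition of three maps, each of which is a homeomorphism, and track the identifications carefully. First, rescaling $h\mapsto m=|K^*|h$ is a homeomorphism $\cT^{K^*}_S\to\cT_S$ by definition of $\cT^{K^*}_S$. Second, $(E_L^{m_0})^{-1}:\cT_S\to\cML_S$ is a homeomorphism by Thurston's Earthquake Theorem \cite{thurston-earthquakes}, as recalled just above the statement. Third, the map $\cML_S\to\cML_S$ sending $\mu\mapsto (1/\sqrt{|K^*|})\mu$ is a homeomorphism (scaling a measured lamination by a fixed positive constant is a homeomorphism of $\cML_S$, since $K\in(-1,0)$ forces $K^*\in(-\infty,-1)$, so $\sqrt{|K^*|}>1$ is a fixed finite positive number). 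Hence $u_K$, being the composition of these three homeomorphisms, is a homeomorphism. This part is essentially formal.

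\textbf{Part (2): the convergence statement.} This is the substantive part. Fix the sequence $K_n\to -1$ and $h_n\in\cT^{K_n^*}_S$ with $h_n\to l$ in the marked length spectrum, i.e. for every closed curve $c$ on $S$, $L_{h_n}(c)\to i(l,c)$. Write $m_n=|K_n^*|h_n$ for the associated hyperbolic metric and $\mu_n=(E_L^{m_0})^{-1}(m_n)\in\cML_S$, so that $u_{K_n}(h_n)=(1/\sqrt{|K_n^*|})\mu_n$. Since $K_n\to -1$ we have $K_n^*\to -\infty$, hence $|K_n^*|\to+\infty$ and $\sqrt{|K_n^*|}\to+\infty$; thus $u_{K_n}(h_n)$ is a strong contraction of $\mu_n$. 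The first step is to observe that $L_{m_n}(c)=|K_n^*|\,L_{h_n}(c)$ for every closed curve $c$ (geodesic lengths scale with the square root of the metric, but here $m_n=|K_n^*|h_n$ rescales the metric tensor by $|K_n^*|$, so lengths scale by $\sqrt{|K_n^*|}$ --- I need to be careful with this exponent and will state it as $L_{m_n}(c)=\sqrt{|K_n^*|}\,L_{h_n}(c)$, matching the $1/\sqrt{|K^*|}$ normalization in the definition of $u_K$). Combined with $L_{h_n}(c)\to i(l,c)$, this gives $L_{m_n}(c)/\sqrt{|K_n^*|}\to i(l,c)$, so the rescaled hyperbolic metrics $(1/\sqrt{|K_n^*|})m_n$ converge to $l$ in the marked length spectrum. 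The core of the argument is then: as the ``amount of earthquaking'' relative to the base metric $m_0$ tends to infinity, a hyperbolic metric obtained by earthquaking $m_0$ along $\mu_n$, when rescaled down by the growth rate, has marked length spectrum converging to that of the lamination used; more precisely, I claim $\frac{1}{\sqrt{|K_n^*|}}L_{E_L^{m_0}(\mu_n)}(c)\to i(l,c)$ for all $c$ \emph{forces} $\frac{1}{\sqrt{|K_n^*|}}\mu_n\to l$ in $\cML_S$. The mechanism is the standard estimate comparing the length of $c$ in $E_L^{m_0}(\mu)$ with $i(\mu,c)$: one has $i(\mu,c)\le L_{E_L^{m_0}(\mu)}(c)\le i(\mu,c)+C(m_0,c)$ where the additive error $C$ depends only on $m_0$ and the topological type of $c$ (this is because an earthquake along $\mu$ stretches $c$ by at least the total transverse measure it crosses, while the discrepancy is controlled by how many times $c$ must ``backtrack,'' a bounded combinatorial quantity once $m_0$ is fixed --- see the earthquake/shear-coordinate literature, e.g. \cite{thurston-earthquakes} and Bonahon's shear coordinates). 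Dividing by $\sqrt{|K_n^*|}$: since $\sqrt{|K_n^*|}\to\infty$, the additive error $C(m_0,c)/\sqrt{|K_n^*|}\to 0$, hence $\frac{1}{\sqrt{|K_n^*|}}i(\mu_n,c)\to i(l,c)$ for every closed curve $c$. Because $i(\,\cdot\,,c)=i(\mu_n/\sqrt{|K_n^*|},c)$ and the functions $i(\,\cdot\,,c)$ over all closed curves $c$ separate points and generate the topology of $\cML_S$, this is exactly the statement that $u_{K_n}(h_n)=\mu_n/\sqrt{|K_n^*|}\to l$ in $\cML_S$.

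\textbf{Main obstacle.} The delicate point is the uniform two-sided comparison $i(\mu,c)\le L_{E_L^{m_0}(\mu)}(c)\le i(\mu,c)+C(m_0,c)$ with the error independent of the size of $\mu$; this is where I expect the real work to lie, and I would either cite it from the theory of earthquakes / shear coordinates (it follows from the fact that in shear coordinates adapted to a triangulation the holonomy length of $c$ differs from the sum of shears along $c$ by a bounded amount) or, if a clean reference is lacking for arbitrarily large $\mu$, prove it by approximating $\mu$ by weighted multicurves and using the explicit trace estimates for earthquakes along simple closed curves. A secondary subtlety is keeping the rescaling exponents consistent ($|K^*|$ versus $\sqrt{|K^*|}$) between the definition of $u_K$, the length scaling, and the curvature normalization $K^*=K/(K+1)$; I would fix conventions once at the start of the proof. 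Everything else --- the compositional structure in Part (1), the fact that marked-length-spectrum convergence of rescaled metrics passes to the lamination limit --- is routine given Thurston's Earthquake Theorem and the standard description of the topology on $\cML_S$.
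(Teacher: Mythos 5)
Your proposal follows essentially the same route as the paper: Part (1) is formal (a composition of the homothety, the inverse earthquake map, and a fixed rescaling of $\cML_S$), and for Part (2) you reduce the convergence to a uniform additive comparison between $L_{E_L^{m_0}(\mu)}(\gamma)$ and $i(\mu,\gamma)$, then divide by $\sqrt{|K_n^*|}\to\infty$ to kill the error. That is exactly the paper's strategy.

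The one substantive gap is that you leave the key estimate as a black box with a gesture at shear coordinates, whereas the paper states and proves it as Lemma \ref{lm:estimate}:
\[
i(\gamma,l)-L_{m_0}(\gamma) \;\le\; L_{E_L^{m_0}(l)}(\gamma) \;\le\; i(\gamma,l)+L_{m_0}(\gamma)~.
\]
The paper's proof of this is elementary and self-contained: reduce to rational $l$ by density, pass to the annular cover of $S$ determined by $\gamma$, and compare the broken geodesic obtained by earthquaking $\gamma$ to the new geodesic representative of $\gamma$ via the triangle inequality in three configurations of signs. You should either carry out such an argument or supply a precise reference; as stated, the central inequality is an assertion. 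Also note that the lower bound you claim, $i(\mu,c)\le L_{E_L^{m_0}(\mu)}(c)$, is sharper than what the paper's proof actually yields (and than what one should expect: a left earthquake through an obtuse intersection initially shortens $\gamma$, so the $-L_{m_0}(c)$ defect is natural). This imprecision is harmless to your conclusion, since the additive error is divided by $\sqrt{|K_n^*|}\to\infty$ either way, but you should state and use the correct two-sided version. Everything else, including your initial catch of the $|K^*|$-versus-$\sqrt{|K^*|}$ exponent in the length scaling, matches the paper.
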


In fact, point (2) of this lemma is not entirely sufficient for the proof of the main result, and we will need the following slightly refined statement, whose proof follows from the same arguments, see below.

\begin{lemma} \label{lm:earthquakes-uniform}
  Let $(K_n)_{n\in N}$ be a sequence in $(-1,0)$ converging to $-1$, and let $(h_n(t))_{n\in \N}$ be a sequence of families of metrics, with $h_n(t)\in \cT^{K_{n}^*}_S$, depending on a parameter $t$ in a topological space $T$. Let $\gamma$ be a closed curve on $S$. Assume that $L_{h_n(t)}(\gamma)\to l(t)\in \cML_S$ uniformly in $t$ on compact subsets of $T$, where $l(t)\in \cML_S$ for each $t\in T$. Then $i(u_{K_n}(h_n(t)), \gamma)\to i(l(t),\gamma)$ uniformly in $t$ on compact subsets of $T$.
\end{lemma}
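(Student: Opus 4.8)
\textbf{Proof proposal for Lemma \ref{lm:earthquakes-uniform}.}

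The plan is to prove the uniform version as a direct reinforcement of the proof of Lemma \ref{lm:earthquakes}(2), tracking all quantitative dependences on the parameter $t$ and showing they can be controlled uniformly over a fixed compact set $C\subset T$. The starting observation is the key relation between earthquakes and length functions: by a theorem of Kerckhoff (or, equivalently, by the derivative formula for earthquake paths), for a fixed hyperbolic metric $m_0\in\cT_S$ and any measured lamination $\mu$, the length of $\gamma$ in the earthquaked metric $E_L^{m_0}(\mu)$ is comparable to $i(\mu,\gamma)$ for $\mu$ large: more precisely one has an estimate of the form $\big| L_{E_L^{m_0}(\mu)}(\gamma) - i(\mu,\gamma)\big| \le C(m_0,\gamma)$ uniformly in $\mu$, and moreover $L_{E_L^{m_0}(\mu)}(\gamma)/i(\mu,\gamma)\to 1$ as $i(\mu,\gamma)\to\infty$. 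Write $\mu_n(t)=(E_L^{m_0})^{-1}\big(|K_n^*|\,h_n(t)\big)$, so that $u_{K_n}(h_n(t)) = (1/\sqrt{|K_n^*|})\,\mu_n(t)$; note that $|K_n^*|\to\infty$ as $K_n\to-1$, so the rescaled metrics $|K_n^*|h_n(t)$ have length functions $|K_n^*|^{1/2}L_{h_n(t)}$ blowing up. By hypothesis $L_{h_n(t)}(\gamma)\to i(l(t),\gamma)$ uniformly on $C$, hence $L_{|K_n^*|h_n(t)}(\gamma) = |K_n^*|^{1/2}L_{h_n(t)}(\gamma)\to\infty$ uniformly on $C$ (assuming $i(l(t),\gamma)$ is bounded below away from zero on $C$; the degenerate case $i(l(t),\gamma)=0$ is handled separately by a continuity/limiting argument, as in the proof of point (2)).

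The core estimate is then the following chain. Using Kerckhoff's comparison applied to $\mu_n(t)$,
\begin{equation*}
\Big| L_{|K_n^*|h_n(t)}(\gamma) - i(\mu_n(t),\gamma)\Big| \le C(m_0,\gamma)~,
\end{equation*}
uniformly in $n$ and $t$. Dividing by $|K_n^*|^{1/2}$ and using $i(\mu_n(t),\gamma) = |K_n^*|^{1/2}\, i(u_{K_n}(h_n(t)),\gamma)$ by homogeneity of the intersection number, we get
\begin{equation*}
\Big| L_{h_n(t)}(\gamma) - i(u_{K_n}(h_n(t)),\gamma)\Big| \le |K_n^*|^{-1/2} C(m_0,\gamma)~.
\end{equation*}
Since $|K_n^*|^{-1/2}\to 0$, the right-hand side goes to $0$ independently of $t\in C$; combining this with the hypothesis $L_{h_n(t)}(\gamma)\to i(l(t),\gamma)$ uniformly on $C$ gives $i(u_{K_n}(h_n(t)),\gamma)\to i(l(t),\gamma)$ uniformly on $C$, which is the claim. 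The point (1) of Lemma \ref{lm:earthquakes}, that $u_K$ is a homeomorphism, follows at once from Thurston's Earthquake Theorem since $u_K$ is a composition of $(E_L^{m_0})^{-1}$ with two homeomorphisms (rescaling by $|K^*|$ and by $1/\sqrt{|K^*|}$).

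The step I expect to be the main obstacle is obtaining the comparison estimate $\big| L_{E_L^{m_0}(\mu)}(\gamma) - i(\mu,\gamma)\big| \le C(m_0,\gamma)$ with a constant \emph{uniform in $\mu$} — in particular uniform as the measured lamination $\mu_n(t)$ varies both with $n$ and with $t$ — rather than merely the pointwise asymptotic statement. One clean way to secure it: first reduce to the case where $\mu$ is supported on a simple closed curve (weighted) by density and continuity, then use the explicit geometry of a single earthquake along a simple closed curve, where the trace of the holonomy of $\gamma$ can be written down and estimated directly; the contribution of each intersection point of $\gamma$ with the earthquake locus is the earthquake weight there, up to an additive error bounded in terms of the collar lemma and the (bounded) number of intersections. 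A subtlety is that the number $i(\mu,\gamma)$ of "intersections" is itself bounded in terms of $\gamma$ and the combinatorics of $\mathrm{supp}(\mu)$, and one should make sure the additive error does not secretly depend on how complicated $\mathrm{supp}(\mu_n(t))$ becomes; controlling this is exactly where one invokes, if needed, a compactness argument on the relevant region of $\cML_S$ coming from the fact that $h_n(t)$ stays in a compact family of marked length spectra. The degenerate case $i(l(t),\gamma)=0$, where the leading-order term vanishes, also requires separate care: there one argues that $L_{h_n(t)}(\gamma)\to 0$ uniformly forces $i(u_{K_n}(h_n(t)),\gamma)\to 0$ uniformly, using monotonicity of $i(u_{K_n}(h_n(t)),\gamma)$ as a function of the length and the fact that $\mu\mapsto i(\mu,\gamma)$ and the earthquake length function are comparable from below as well.
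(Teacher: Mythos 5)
Your proof takes essentially the same route as the paper: rescale $h_n(t)$ to the hyperbolic metric $m_n(t)=|K_n^*|h_n(t)$, unwind through the earthquake map, apply a uniform additive comparison between $L_{E_L^{m_0}(\mu)}(\gamma)$ and $i(\mu,\gamma)$, and divide by $\sqrt{|K_n^*|}$. The algebra is correct and the conclusion follows.

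However, two of the concerns you raise at the end are unfounded in the paper's setting. The comparison you call ``Kerckhoff's comparison'' and worry about making uniform in $\mu$ is exactly Lemma~\ref{lm:estimate} of the paper, which has already been proved (via an elementary triangle-inequality argument in the annular cover) with the fully explicit constant $C(m_0,\gamma)=L_{m_0}(\gamma)$, uniform in $\mu\in\cML_S$ with no restriction; there is no need to reduce to simple closed curves, invoke collar estimates, or run a compactness argument on $\cML_S$. Likewise, because the resulting bound
\[
\big| L_{h_n(t)}(\gamma)- i(u_{K_n}(h_n(t)),\gamma)\big|\leq \frac{L_{m_0}(\gamma)}{\sqrt{|K_n^*|}}
\]
is \emph{additive} and its right-hand side is independent of $t$, the conclusion $i(u_{K_n}(h_n(t)),\gamma)\to i(l(t),\gamma)$ uniformly on compacta follows immediately from $L_{h_n(t)}(\gamma)\to i(l(t),\gamma)$ uniformly on compacta, with no case distinction: the ``degenerate case'' $i(l(t),\gamma)=0$, and the auxiliary requirement that $i(l(t),\gamma)$ be bounded below, are both irrelevant. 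If you streamline your argument by simply citing Lemma~\ref{lm:estimate} and dropping these caveats, you recover the paper's proof almost verbatim.
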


The proof will rely on an elementary estimate, which might be of independent interest. See \cite[Lemma 16]{bonahon1992earthquakes} for a \emph{complementary} estimate.

\begin{lemma} \label{lm:estimate}
  Let $(S,h)$ be a hyperbolic surface, let $l\in \cML_S$, and let $\gamma$ be a homotopy class of simple closed curves on $S$. Then
  \[i(\gamma,l)-L_m(\gamma) \leq L_{E_L^m(l)}(\gamma) \leq i(\gamma,l)+L_m(\gamma)~,\]
  where $i(\gamma,l)$ denotes the intersection between $\gamma$ and $l$, and $L_m(\gamma)$ denotes the length of the geodesic representative of $\gamma$ for $m$.  
\end{lemma}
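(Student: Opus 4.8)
The plan is to estimate the length of the geodesic representative of $\gamma$ in the earthquaked metric $E_L^m(l)$ by comparing it, leaf by leaf of the lamination, to the length in $m$ together with the shear contributed by crossing the support of $l$. First I would reduce to the case where $l$ is a weighted multicurve: a general measured lamination is a limit of weighted multicurves in the weak-$*$ topology, both the intersection number $i(\gamma,l)$ and the earthquake map $E_L^m(\cdot)$ are continuous in this topology (length functions of closed curves depend continuously on the earthquaked metric), so it suffices to prove the two inequalities when $l=\sum_j w_j \gamma_j$ for disjoint simple closed geodesics $\gamma_j$.

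For the weighted multicurve case, I would realize $\gamma$ by its $m$-geodesic representative $c$, cut it at its intersection points with the $\gamma_j$, and track what the left earthquake does. The earthquake along $l$ is obtained by cutting $(S,m)$ along each $\gamma_j$ and regluing with a left shear of magnitude $w_j$. A path crossing $\gamma_j$ transversally gets its two sides slid past each other by arclength $w_j$ along $\gamma_j$. So a natural competitor curve in $E_L^m(l)$ is obtained from $c$ by inserting, at each crossing with $\gamma_j$, an arc running along $\gamma_j$ of length at most $w_j$; the total inserted length is $\sum_j (\text{number of crossings of } c \text{ with } \gamma_j)\cdot w_j = i(\gamma,l)$, so this competitor (which is freely homotopic to $\gamma$ in the new surface) has length at most $L_m(\gamma)+i(\gamma,l)$, giving the upper bound $L_{E_L^m(l)}(\gamma)\le i(\gamma,l)+L_m(\gamma)$. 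For the lower bound I would run the symmetric argument using the inverse earthquake: $E_L^m(l)$ earthquaked to the left along the image of $l$ (appropriately understood) recovers $m$, so $L_m(\gamma)\ge L_{E_L^m(l)}(\gamma)-i(\gamma, l)$ — more carefully, one applies the already-proven upper bound with the roles of $m$ and $m'=E_L^m(l)$ swapped, using that the measured lamination pushed forward to $(S,m')$ has the same intersection number with $\gamma$ (intersection number is a topological invariant of the pair $(\gamma,l)$, unchanged by the earthquake). This yields $i(\gamma,l)-L_m(\gamma)\le L_{E_L^m(l)}(\gamma)$ after rearranging, i.e. exactly the left inequality.

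The main obstacle I anticipate is making the ``insert an arc of length $\le w_j$ at each crossing'' construction rigorous, because after cutting and regluing, the concatenated path $c$-with-detours is only piecewise geodesic and a priori not even continuous at the seams unless one is careful about which endpoint on $\gamma_j$ one slides to; one must check that inserting a sub-arc of $\gamma_j$ joining the two relevant lift-endpoints does produce a genuine closed curve in the correct free homotopy class, and that its length is bounded as claimed even when $c$ crosses the same $\gamma_j$ many times (the shears from successive crossings along the same curve can partially cancel or reinforce, but the \emph{length} of each detour is still bounded by $w_j$ individually, which is all that is needed for the inequality — cancellation would only help). A clean way to handle this is to work in the universal cover $\widetilde S = \HH^2$: lift $c$ to a geodesic segment between two points identified by the deck transformation representing $\gamma$, lift the earthquake to the explicit piecewise-isometry of $\HH^2$ determined by the shear cocycle, and observe that the image of the lifted endpoint is moved by a bounded amount — specifically, the displacement is controlled by the total transverse measure $i(\gamma,l)$ encountered — so that the image geodesic segment, which realizes $L_{E_L^m(l)}(\gamma)$ after quotienting, has length within $i(\gamma,l)$ of $L_m(\gamma)$. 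This universal-cover formulation also sidesteps continuity worries in the weak-$*$ limit, since the shear cocycle and the displacement bound both pass to the limit directly. I expect the estimate in \cite[Lemma 16]{bonahon1992earthquakes}, cited as complementary, to provide a bound in the opposite direction (lower bounds on lengths under earthquakes in terms of the measure), so the two together pin down the length up to an additive error of $L_m(\gamma)$, which is exactly the content of this lemma.
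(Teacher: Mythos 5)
Your upper-bound argument (realize $\gamma$ by its $m$-geodesic, insert at each crossing of a leaf $\gamma_j$ a detour along $\gamma_j$ of length at most $w_j$, total extra length $i(\gamma,l)$) is essentially the paper's upper-bound argument, and the reduction to weighted multicurves by density is also what the paper does. The gap is in the lower bound: the ``inverse earthquake / swap the roles of $m$ and $m'$'' trick does not yield the claimed inequality. If $m'=E_L^m(l)$, the inverse is a right earthquake along (the image of) $l$, and applying the same detour estimate to it gives $L_m(\gamma)\le L_{m'}(\gamma)+i(\gamma,l)$, i.e.\ $L_{m'}(\gamma)\ge L_m(\gamma)-i(\gamma,l)$. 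This is a correct inequality but it is not $L_{m'}(\gamma)\ge i(\gamma,l)-L_m(\gamma)$; the two differ in sign and only coincide under an absolute value. In the regime where the lemma is needed, $i(\gamma,l)\gg L_m(\gamma)$, your inequality is vacuous because its left-hand side is negative. (Your written rearrangement is also off: from $L_m(\gamma)\ge L_{m'}(\gamma)-i(\gamma,l)$ one gets $L_{m'}(\gamma)\le L_m(\gamma)+i(\gamma,l)$, the upper bound again, not the lower bound.) The same issue appears in your universal-cover reformulation, which concludes that $L_{m'}(\gamma)$ lies within $i(\gamma,l)$ of $L_m(\gamma)$ --- again $L_m(\gamma)\pm i(\gamma,l)$ rather than the required $i(\gamma,l)\pm L_m(\gamma)$.

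The paper's lower bound is not obtained by symmetry; it is a direct computation in the annular cover associated to $\gamma$. One records, for each leaf $l_i$ crossed by $\gamma$, the signed offsets $x_i,y_i$ of the two regluing points from the new geodesic $c$, with $y_i - x_i$ equal to the weight of $l_i$ (so $i(\gamma,l)=\sum_i(y_i-x_i)$), and shows by a short case analysis on the signs of $x_i,y_{i+1}$ and the triangle inequality that each geodesic subarc $c_i$ of $c$ between consecutive crossings has $\bar m$-length at least $(y_{i+1}-x_i)-L_{\bar m}(\bar c_i)$; summing over $i$ gives $L_{m'}(\gamma)\ge i(\gamma,l)-L_m(\gamma)$. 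This per-segment lower bound is the ingredient your proposal is missing, and it cannot be recovered from the upper bound by reversing the earthquake.
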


\begin{figure}[ht!]
\centering
\input{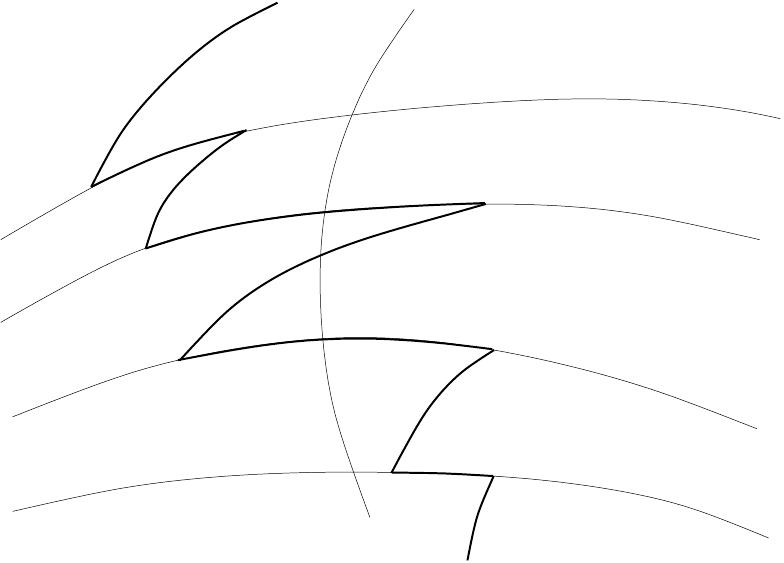_t}
\caption{Proof of Lemma \ref{lm:estimate}, in $(\bar S, \bar m)$} \label{fg:1}
\end{figure}

\begin{proof}
  We denote by $\bar S$ the cover of $S$ with fundamental group generated by $\gamma$. We still denote by $m$ the lift of $m$ to $\bar S$, so that $(\bar S, m)$ is a hyperbolic cylinder. The proof will be done entirely in $\bar S$. We denote by $\bar c'$ the geodesic representative of $\gamma$ in $(\bar S, m)$.

  Still denote by $l$ the measured lamination on $\bar S$ corresponding to $l$. That is, the support of this lamination $l$ is the union of lifts of the geodesics in the support of $l$ on $S$ which intersect $\gamma$, with the corresponding weight. 

  We now assume that $l$ is rational in $S$ -- that is, supported on a finite set of disjoint closed curves. It is sufficient to prove the lemma in this rational case, since the general case then follows by density of rational laminations in $\cML_S$.  

  The set of geodesics in the support of $l$ which intersect $\bar c'$ in $(\bar S, m)$ is a finite set of disjoint geodesics $l_1, \ldots, l_n$ which all intersect $\bar c'$ exactly once. We assume that the $l_i$ intersect $\bar c'$ in this cyclic order.

  Denote by $\bar m=E^L_m(l)$ the image of $m$ -- the metric on $\bar S$ by the left earthquake along $l$. We denote by $c$ the geodesic representative of $\gamma$ in $(\bar S, \bar m)$, and by $\bar c$ the image of the geodesic representative $\bar c'$ under the left earthquake along $l$. That is, $\bar c$ is a disjoint union of geodesic segments $\bar c_1, \bar c_2, \ldots, \bar c_n$, with $\bar c_i$ having one endpoint on $l_i$ and one on $l_{i+1}$. Similarly, we denote by $c_i$ the segment of $c$ between its intersections with $l_i$ and $l_{i+1}$, see Figure \ref{fg:1}. We set
  $$ \gamma_i = L_{\bar m}(c_i)~, ~~\bar\gamma_i = L_{\bar m}(\bar c_i)~. $$

  We consider the geodesics $l_i, 1\leq i\leq n$ as oriented (from the left to the right side of $c$), and denote by $x_i$ the oriented distance along $l_i$ between its intersection with $c$ and its intersection with $\bar c_i$, and by $y_i$ the oriented distance along $l_i$ between its intersection with $c$ and with $\bar c_{i_1}$. By definition of a left earthquake, $y_i-x_i$ is the weight of $l_i$ in the measured lamination $l$. As a consequence,
  $$ i(\gamma, l) = \sum_{i=1}^n y_i-x_i~. $$
  The proof of the upper bound in the lemma directly follows, since there is a curve in $(\bar S,\bar m)$, in the homotopy class $\gamma$, composed of the $\bar c_i$ and of segments of the $l_i$ of length $y_i-x_i$, so that
  $$ L_{\bar m}(\gamma) = L_{\bar m}(c) \leq \sum_i L_{\bar m}(\bar c_i) + \sum_i (y_i-x_i) = L_{m}(\gamma) + i(\gamma, l)~. $$  

  We now aim at providing a lower bound on the lengths $\gamma_i$ of the geodesic segments $c_i$, $1\leq i\leq n$. For this we consider three cases, as seen on Figure \ref{fg:1}.
  \begin{itemize}
  \item $x_i\geq 0, y_{i+1}\geq 0$ (as for $i=1$ in the figure). It then follows from the triangle inequality (or its variant for 4-gons) that
    $$ \gamma_i \geq y_{i+1} - x_i - \bar\gamma_i~. $$
  \item $x_i\leq 0, y_{i+1}\geq 0$ (as for $i=2$ in the figure). The triangle inequality, applied to each of the two triangles appearing in the figure, shows that
    $$ \gamma_i \geq y_{i+1} - x_i - \bar\gamma_i~. $$
  \item $x_i\leq 0, y_{i+1}\leq 0$ (as for $i=3$ in the figure). Again the triangle inequality, applied in a slightly different way compared to the first case, still shows that
    $$ \gamma_i \geq y_{i+1} - x_i - \bar\gamma_i~. $$
  \end{itemize}
  We can now sum the inequalities obtained for $i=1,\ldots, n$ and obtain that
  $$ \sum_i \gamma_i \geq \sum_i (y_i-x_i) - \sum_i \bar\gamma_i, $$
  so that
  $$ L_{\bar m}(\gamma) \geq i(l,\gamma)- L_m(\gamma)~, $$
  as required.
\end{proof}

\begin{proof}[Proof of Lemma \ref{lm:earthquakes}]
  The first point is a direct consequence of the definition of $u_K$ and of the Earthquake Theorem.

  To prove the second point, we will use Lemma \ref{lm:estimate}. Let $m_n$ be the hyperbolic metric homothetic to $h_n$. Let $l_n\in \cML_S$ be the unique measured lamination such that $E^L_{m_0}(l_n)=m_n$, so that, by definition of $u_K$,
  $$ u_{K_n}(h_n) = \frac 1{\sqrt{|K_n^*|}}l_n~. $$
  It follows from Lemma \ref{lm:estimate} that, for all $n$,
  \[ L_{m_n}(c) - L_{m_0}(c) \leq i(l_n,c) \leq L_{m_n}(c) + L_{m_0}(c)~. \]
  This can be written, using the definition of $m_n$, as
  \[ \sqrt{|K_n^*|}L_{h_n}(c) - L_{m_0}(c)\leq i(l_n, c)\leq \sqrt{|K_n^*|}L_{h_n}(c) + L_{m_0}(c)~, \]
  and, dividing by $\sqrt{|K_n^*|}$,
  \[ L_{h_n}(c) - \frac{L_{m_0}(c)}{\sqrt{|K_n^*|}}\leq \frac{i(l_n, c)}{\sqrt{|K_n^*|}}\leq L_{h_n}(c) + \frac{L_{m_0}(c)}{\sqrt{|K_n^*|}}~. \]
  Since $L_{h_n}(c)\to i(l,c)$ by the hypothesis of the lemma, we can conclude that $\frac{i(l_n, c)}{\sqrt{|K_n^*|}}\to i(l,c)$, as required.
\end{proof}

\begin{proof}[Proof of Lemma \ref{lm:earthquakes-uniform}]
  We now consider a sequence of metrics $(h_n(t))_{n\in \N}$ of constant curvature $K^*_n$, depending on a parameter $t\in T$. As above we call $m_n(t)$ the hyperbolic metric in the same conformal class as $h_n(t)$, that is, 
  $$ m_n(t) = |K^*_n|h_n(t)~, $$
  so that by the hypothesis of the lemma,
  $$ \frac 1{\sqrt{|K^*|}}L_{m_n(t)}(\gamma) \to i(l(t),\gamma) $$
  uniformly in $t$ on compact subsets of $T$.

  Applying Lemma \ref{lm:estimate}, we have
  $$ |L_{m_n(t)}(\gamma) - i(l_n(t), \gamma)|\leq L_{m_0}(\gamma)~, $$
  where $l_n(t)=(E_L^{m_0})^{-1}(m_n(t))$. It follows that
  $$ |\sqrt{|K_n^*|}L_{h_n(t)}(\gamma)- \sqrt{|K_n^*|}i(u_{K_n}(h_n(t)), \gamma)|\leq L_{m_0}(\gamma)~, $$
  so that
  $$ |L_{h_n(t)}(\gamma)- i(u_{K_n}(h_n(t)), \gamma)|\leq \frac{L_{m_0}(\gamma)}{\sqrt{|K_n^*|}}~. $$
  It follows directly that if $L_{h_n(t)}(\gamma)\to i(l,\gamma)$ uniformly in $t$ on compact subsets of $T$ as $K\to -1$, then $i(u_{K_n}(h_n(t)), \gamma)\to i(l,\gamma)$ uniformly in $t$ on compact subsets of $T$ as $K\to -1$.
\end{proof}

\subsection{Fibres of a limit of homeomorphisms}

\label{ssc:pseudoisotopy}
We now complete the proof that the fibres of the map $\psi\colon\cC C(M)\rightarrow\cMLrea$ are contractible, using that $\psi$ is the limit of homeomorphisms and that its fibres are compact real-analytic subsets of $\cCC_M$. The following definition is taken from \cite{daverman1986decompositions}.

\begin{definition}\label{def:cellular}
  A subset $F$ of a $n$-dimensional manifold is \emph{cellular} if any neighborhood $U$ of $F$ contains a closed $n$-ball $B$ such that $F\subset\rm{int}(B)$.
\end{definition}

In particular a cellular subset must be compact, as it is a nested intersection of compact balls. A large part of the decomposition theory of manifolds \cite{daverman1986decompositions} is interested in characterizing the fibres of a limit of homeomorphisms, culminating in the acclaimed \emph{Bing's shrinking criterion}. Similar in spirit to that criterion is the following result, which we use here. We state a slightly different form, with weaker conditions better suited to the present context, but the proof is exactly the same as in \cite{finney1967pseudo}.

\begin{theorem}[{Finney \cite[Theorem 1]{finney1967pseudo}}] \label{tm:finney}
  Let $N,N'$ be two manifolds, $d$ a metric on $N'$ (compatible with its topology), and $(f_n)_{n\in\N}$ a sequence of embeddings $N\monic N'$ (i.e.\ homeomorphisms onto their image), converging uniformly on compacts to the continuous map $f\colon N\rightarrow N'$. The compact fibres of $f$ are cellular subsets of the manifold $N$.
\end{theorem}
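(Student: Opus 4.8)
The plan is to mimic the proof of Bing's shrinking-type criterion as in Finney's paper, reducing everything to a local statement about a single compact fibre. First I would fix a compact fibre $F=f^{-1}(\{y_0\})$ for some $y_0\in N'$ and a neighborhood $U$ of $F$ in $N$; the goal is to produce a closed $n$-ball $B\subset U$ with $F\subset\mathrm{int}(B)$. Because $F$ is compact and $f$ is continuous, I would first show that the fibres of $f$ near $F$ are "small": for every neighborhood $U$ of $F$ there is $\varepsilon>0$ such that $f^{-1}(\overline{B_d(y_0,\varepsilon)})\subset U$. This is a standard compactness argument — if not, one extracts a sequence $x_k\notin U$ with $f(x_k)\to y_0$, and since the relevant closed set is eventually inside a compact neighborhood (using properness/local compactness of $N$, or restricting attention to a compact neighborhood of $F$), one gets a limit point $x_\infty\in\overline{N\setminus U}$ with $f(x_\infty)=y_0$, i.e.\ $x_\infty\in F$, a contradiction.

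Next, the heart of the argument: pick a small closed metric ball $\overline{B_d(y_0,\varepsilon)}$ in $N'$ with $f^{-1}(\overline{B_d(y_0,\varepsilon)})\subset U$, and shrink $\varepsilon$ further so that in the manifold $N'$ the set $\overline{B_d(y_0,\varepsilon)}$ is contained in a coordinate chart and is itself a closed $n$-ball (this uses only that $N'$ is a manifold). Now for $n$ large, $f_n$ is a homeomorphism carrying $N$ onto $N'$, so $C_n:=f_n^{-1}(\overline{B_d(y_0,\varepsilon)})$ is a closed $n$-ball in $N$. The remaining work is to show that for $n$ large enough $C_n$ is sandwiched between two concentric metric balls around $y_0$ pulled back by $f$, hence $F\subset\mathrm{int}(C_n)\subset C_n\subset U$. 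Concretely: choose $\varepsilon'<\varepsilon$ with $f^{-1}(\overline{B_d(y_0,\varepsilon)})\subset U$ still, and use uniform convergence $f_n\to f$ on the compact set $\overline{f^{-1}(\overline{B_d(y_0,\varepsilon)})}$ to guarantee, for $n$ large, that $f(x)\in B_d(y_0,\varepsilon/2)$ implies $f_n(x)\in B_d(y_0,\varepsilon)$ (so $f^{-1}(B_d(y_0,\varepsilon/2))\subset \mathrm{int}(C_n)$, giving $F\subset\mathrm{int}(C_n)$), and that $f_n(x)\in \overline{B_d(y_0,\varepsilon)}$ implies $f(x)\in \overline{B_d(y_0,2\varepsilon)}$; if $2\varepsilon$ was chosen small enough at the outset that $f^{-1}(\overline{B_d(y_0,2\varepsilon)})\subset U$, this yields $C_n\subset U$. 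Taking $B=C_n$ for one such large $n$ finishes the verification of cellularity.

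The step I expect to be the main obstacle is making the compactness/properness argument in the first paragraph rigorous without assuming $N$ itself is compact: one must be careful that the pulled-back sets $f^{-1}(\overline{B_d(y_0,\varepsilon)})$ are compact (or at least that one can work inside a fixed compact neighborhood of $F$), and that the chain of inclusions $f^{-1}(B_d(y_0,\varepsilon/2))\subset\mathrm{int}(C_n)\subset C_n\subset f^{-1}(\overline{B_d(y_0,2\varepsilon)})$ really holds for all large $n$ simultaneously. Since $f$ is only assumed continuous (and its fibre $F$ compact), the preimage of a small closed ball need not be compact a priori, so I would first replace $N$ by a relatively compact open neighborhood of $F$ on which everything takes place, shrink $U$ into it, and then run the argument there; this is exactly the point where one uses that $F$ is a \emph{compact} fibre. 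Everything else is a routine $\varepsilon$-chase with uniform convergence on compacts, and the conclusion that a cellular subset of a manifold is in particular compact and — crucially for Lemma \ref{lm:contractible} — cell-like, hence (being also a compact analytic set, by Lemma \ref{lm:analytic}) a point or at least contractible, is then invoked from \cite{daverman1986decompositions} in the next subsection.
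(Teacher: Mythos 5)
Your proposal is close in spirit to the paper's argument (pull back a small topological ball in $N'$ through a near-homeomorphism), but there is a genuine circularity in the $\varepsilon$-chase. You define $C_n=f_n^{-1}\bigl(\overline{B_d(y_0,\varepsilon)}\bigr)$ and want to conclude $C_n\subset U$ from the implication ``\,$f_n(x)\in\overline{B_d(y_0,\varepsilon)}\Rightarrow f(x)\in\overline{B_d(y_0,2\varepsilon)}$\,''; but this implication requires $d(f_n(x),f(x))<\varepsilon$ for every $x\in C_n$, and uniform convergence on compacts only supplies such an estimate on a \emph{fixed} compact set, not on the $n$-dependent sets $C_n$, which you have not yet located near $F$. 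That is, you are using a closeness estimate on $C_n$ to prove that $C_n$ stays near $F$, which is exactly what you do not know a priori. Your own remedy---replacing $N$ by a relatively compact open neighborhood of $F$---does not repair this: $f_n$ restricted to such a neighborhood is no longer a homeomorphism onto $N'$, so $f_n^{-1}(\overline{B_d})$ need not be a ball nor stay in that neighborhood.

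The correct repair, and essentially what the paper's proof does, is to (i) fix once and for all an open $V$ with $F\subset V\subset\overline V\subset U$ and $\overline V$ compact, (ii) use uniform convergence on the fixed compact $\overline V$ to show that $\mathrm{diam}\bigl(f_n(F)\cup\{y_0\}\bigr)\to 0$ while $d\bigl(f_n(F),f_n(\partial\overline V)\bigr)$ stays bounded away from $0$, and then (iii) pick a small topological $n$-ball $B'$ around $y_0$ with $f_{n_0}(F)\subset\mathrm{int}(B')$ and $B'\cap f_{n_0}(\partial\overline V)=\emptyset$, concluding $B'\subset f_{n_0}(V)$ by connectedness of $B'$ and pulling back $B:=f_{n_0}^{-1}(B')$. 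The compact ``shell'' $\overline V$, on which the uniform estimate is applied, and the connectedness step are exactly what your argument is missing. (One further shared technicality: you assert that, for $\varepsilon$ small, $\overline{B_d(y_0,\varepsilon)}$ is a topological $n$-ball ``since $N'$ is a manifold.'' This is not automatic for an arbitrary compatible metric $d$; one should instead choose $B'$ to be a genuine coordinate ball sandwiched inside the desired metric ball. The paper's proof also leaves this implicit.)
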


\begin{proof}
  Let $K\coloneqq f^{-1}(x)\subseteq N$ be a compact fibre of $f$, with $x\in N'$. Let $U$ be a neighborhood of $K$ in $N$. We need to find a closed $n$-ball $B$ contained in $U$ such that $K\subset\rm{int}(B)$.

  Let $V\subseteq N$ be an open set with $K\subset V\subset \overline{V}\subset U$ and such that $\overline{V}$ is compact. We prove the two following facts.
  \begin{enumerate}
    \item The diameter (with respect to $d$) of $f_n(K)\cup\{x\}$ tends to $0$ as $n$ tends to infinity.
    \item There exists $\epsilon_0>0$ such that $d(f_n(K),f_n(\partial\overline{V}))\geq\epsilon_0$ for all $n\in\N$.
  \end{enumerate}
  We first prove (1). Assume it does not hold, then there exists $\epsilon>0$, a strictly increasing sequence $(n_k)_{k\in\N}$ in $\N$ and points $y_k,z_k\in f_{n_k}(K)\cup\{x\}$ such that $d(y_k,z_k)\geq\epsilon$. We can assume that $y_k\neq x$ for all $k$. In particular $y_k=f_{n_k}(b_k)$ for some $b_k\in K$ and by compactness we can assume that $b_{k}\rightarrow b\in K$. Up to passing to a subsequence, we can assume that either $z_k=x$ for all $k$ or $z_k\neq x$ for all $k$. In the first case, one has $d(y_k,z_k)=d(f_{n_k}(b_k),x)\rightarrow d(f(b),x)=0$ by uniform convergence of $f_n\rightarrow f$ on $K$. In the second case, one has $z_{k}=f_{n_k}(c_{k})$ for some $c_k\in K$ and again we can assume that $c_{k}\rightarrow c\in K$, so that $d(y_k,z_k)=d(f_{n_k}(b_k),f_{n_k}(c_k))\rightarrow d(f(b),f(c))=0$ for the same reason. Both cases lead to a contradiction, which concludes the proof of (1).

  We now prove (2). Assume it does not hold, then there exists a strictly increasing sequence $(n_k)_{k\in\N}$ in $\N$ and points $b_{k}\in K$, $v_k\in\partial\overline{V}$ such that $d(f_{n_k}(b_k),f_{n_k}(v_k))<\tfrac{1}{k}$ for all $k\in\N$. By compactness of both $K$ and $\partial\overline{V}$, we can assume that $b_k\rightarrow b\in K$ and $v_k\rightarrow v\in\partial\overline{V}$. Then $d(f_{n_k}(b_k),f_{n_k}(v_k))\rightarrow d(f(b),f(v))=0$ by uniform convergence of $f_n\rightarrow f$ on $\overline{V}$ and by construction. Thus $x=f(b)=f(v)$, hence $v\in f^{-1}(x)=K$, which is a contradiction since $\partial\overline{V}\cap K=\emptyset$. This concludes the proof of (2).

  With (1) and (2) in hands, we can now conclude the proof. Let $B'\subset N'$ be the closed ball around $x$ of radius $\tfrac{\epsilon_0}{3}$, where $\epsilon_0$ is the one given by (2). By (1), there exists $n_0\in\N$ large enough such that $f_{n_0}(K)\subseteq\rm{int}(B')$. By (2), we have $f_{n_0}(\partial\overline{V})\cap B'=\emptyset$. But a ball meeting a compact set but not its boundary must be contained in its interior, for otherwise the ball would be disconnected. Therefore, $B'\subseteq f_{n_0}(V)$ and, since $f_{n_0}$ is a homeomorphism onto its image, the preimage $B\coloneqq f_{n_0}^{-1}(B')$ is the required closed $n$-ball around $K$ and contained in $U$. This concludes the proof that $K$ is cellular.
\end{proof}

In general, a cellular subset $F\subset N$ can be quite pathological and far from being contractible \cite[Figure 9-1]{daverman1986decompositions}. For example, the \emph{topologist's sine curve} is cellular but not path-connected. Under suitable regularity conditions, however, a cellular subset is contractible. Actually, it is the case when $F$ is a absolute neighborhood retract (ANR). This follows from Corollaries 2B and 3A in \cite[Section 14]{daverman1986decompositions}, but we give a direct proof for the reader's convenience.

\begin{lemma}\label{lm:CANR+cellular_contractible}
  Let $F$ be a cellular subset of a manifold $N$. If $F$ is an absolute neighborhood retract (ANR), then $F$ is contractible.
\end{lemma}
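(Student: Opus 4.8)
The plan is to realize $F$ as a retract of a closed $n$-ball and then invoke the elementary fact that a retract of a contractible space is contractible.

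First I would record that $F$ is compact: by Definition \ref{def:cellular} it is a nested intersection of closed $n$-balls, hence compact, and therefore closed in the metrizable manifold $N$. Since $F$ is an ANR and is embedded as a closed subset of the metric space $N$, the defining property of absolute neighborhood retracts provides an open set $W\subseteq N$ with $F\subseteq W$ together with a retraction $r\colon W\to F$ (so $r|_F=\mathrm{id}_F$).

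Next I would apply cellularity to this particular neighborhood, i.e.\ take $U=W$ in Definition \ref{def:cellular}: there is a closed $n$-ball $B$ with $F\subset\mathrm{int}(B)$ and $B\subseteq W$. Restricting $r$ to $B$ gives a retraction $r|_B\colon B\to F$; equivalently, writing $\iota\colon F\hookrightarrow B$ for the inclusion, $r|_B\circ\iota=\mathrm{id}_F$. Thus $F$ is a retract of $B$, which is homeomorphic to $D^n$ and in particular contractible. Finally I would spell out the last step explicitly: if $H\colon B\times[0,1]\to B$ is a contraction with $H(\cdot,0)=\mathrm{id}_B$ and $H(\cdot,1)\equiv b_0$, then $G\colon F\times[0,1]\to F$ defined by $G(x,t)=r\bigl(H(\iota(x),t)\bigr)$ satisfies $G(\cdot,0)=r\circ\iota=\mathrm{id}_F$ and $G(\cdot,1)\equiv r(b_0)$, so $G$ contracts $F$ to a point.

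I do not anticipate a genuine obstacle here; the only points that deserve a little care are the correct use of the ANR hypothesis — namely that the neighborhood retraction may be taken inside $N$ itself, which is legitimate precisely because $F$ is closed in the metrizable space $N$ — and stating explicitly the "retract of a contractible space is contractible" fact used at the end. Everything else is a direct unwinding of the definitions of \emph{cellular} and \emph{ANR}.
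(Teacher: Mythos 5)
Your proof is correct and follows essentially the same route as the paper's: use the ANR property to get a retraction $r$ from a neighborhood of $F$ in $N$, use cellularity to fit a closed $n$-ball $B$ with $F\subset\mathrm{int}(B)$ inside that neighborhood, and then note $r|_B$ exhibits $F$ as a retract of the contractible ball $B$. The only (welcome) addition is your explicit remark that $F$ is closed in $N$, which is what licenses applying the ANR property to obtain the neighborhood retraction inside $N$.
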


\begin{proof}
  Let us suppose that $F$ is an ANR cellular subset of $N$. To prove that $F$ is contractible, we show that the identity map $\rm{id}_F\colon F\rightarrow F$ is homotopic (inside $F$) to a constant map. Since $F$ is an ANR, it is a neighborhood retract in $N$, i.e.\ there is a neighborhood $U\subset N$ of $F$ such that the inclusion $i\colon F\monic U$ has a retraction $r\colon U\epic F$. Now, by cellularity of $F$, there is a closed $n$-ball $B$ such that $F\subset\rm{int}(B)\subset B\subset U$. Since $B$ is contractible, there is a homotopy $H\colon F\times [0,1]\rightarrow B$ from the inclusion $j\colon F\monic B$ to a constant map. Then the composition
  \begin{equation*}
    r\restr{B}\circ H\colon F\times [0,1]\xrightarrow{H} B\xrightarrow{r\restr{B}} F
  \end{equation*}
  is a homotopy from $\rm{id}_F=r\restr{B}\circ j$ to a constant map, as required.
\end{proof}

The last ingredient of our argument is the following lemma, which follows directly from Corollary 8A in \cite[Section 14]{daverman1986decompositions} combined with Sullivan's local characterization of real-analytic subsets \cite{sullivan1970combinatorial}.

\begin{lemma}\label{lem:compact_real_analytic_subset_is_CANR}
  A compact real-analytic subset of $\R^n$ is an ANR.
\end{lemma}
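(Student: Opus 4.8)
The plan is to reduce the statement to two facts: a local one, that a real analytic subset of $\R^n$ is locally contractible (indeed, locally a cone, hence an ANR locally), and a global one, that a compact ANR-germ assembles into a genuine ANR. First I would invoke Sullivan's local structure theorem \cite{sullivan1970combinatorial}: every point $x$ of a real analytic variety $V\subset\R^n$ has a neighborhood basis of sets homeomorphic to the open cone over a compact real analytic link $L_x$ (more precisely, a neighborhood homeomorphic to the cone on a compact polyhedron, since $L_x$ is triangulable). In particular each point of $V$ has arbitrarily small contractible (even cone-like) neighborhoods, so $V$ is locally contractible; combined with the fact that $V$ is locally compact, finite-dimensional, and metrizable (being a subset of $\R^n$), this already gives that $V$ is an ANR \emph{locally at each point}, i.e.\ $V$ is a so-called local ANR.

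The second step is to upgrade ``local ANR'' to ``ANR'' using compactness. For locally compact, separable metric spaces this upgrade is standard: a space that is an ANR in a neighborhood of each of its points is an ANR. I would cite Corollary 8A of \cite[Section 14]{daverman1986decompositions}, which is exactly the packaging of Sullivan's theorem needed here; alternatively one can quote the classical result (Hanner, or Hu's book on retracts) that a paracompact space which is locally an ANR is an ANR. Since our variety is compact, paracompactness is immediate, so this application is clean. Combining the two steps: the compact real analytic subset $A\subset\R^n$ is locally an ANR by Sullivan, hence an ANR by the local-to-global principle.

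Concretely, the proof reads: let $A\subset\R^n$ be compact real analytic. By Sullivan's local characterization \cite{sullivan1970combinatorial}, every $a\in A$ has an open neighborhood $U_a$ in $A$ that is homeomorphic to an open cone on a compact set, in particular $U_a$ is an ANR (a cone on a compact ANR, equivalently a locally finite polyhedron, is an ANR). Thus $A$ is covered by open subsets each of which is an ANR, i.e.\ $A$ is locally an ANR. Since $A$ is a compact metric space, hence paracompact, the local-to-global theorem for ANRs (see \cite[Section 14, Corollary 8A]{daverman1986decompositions}) implies that $A$ itself is an ANR. This is what Lemma \ref{lem:compact_real_analytic_subset_is_CANR} asserts.

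The main obstacle is purely a matter of quoting the right black boxes rather than genuine difficulty: one must be careful that Sullivan's theorem is stated for real analytic \emph{varieties} in the appropriate sense (zero sets of real analytic functions), which is exactly the notion of ``analytic subvariety'' used earlier in the paper via Bonahon's shear-bend coordinates, and that the local cone structure indeed produces ANR neighborhoods (a cone $cL$ on a compact polyhedron $L$ is an ANR because it is a locally finite simplicial complex). The other point needing a little care is the local-to-global step: it is a theorem that requires the space to be metrizable (or at least paracompact and locally metrizable), which holds here since $A\subset\R^n$; this is automatic and needs no separate argument. No step involves an estimate or a construction, so the ``proof'' is essentially a two-line citation once these standard facts are identified.
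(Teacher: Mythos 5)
Your proposal is correct and follows essentially the same route as the paper: invoke Sullivan's local cone-over-polyhedron structure to get that the set is locally an ANR, then upgrade to a genuine ANR via Hanner's local-to-global theorem for metrizable spaces. The paper is terser (citing Hanner's Theorem~3.2 for the local-to-global step and Corollary~3.5 for the cone being an ANR), but the ingredients and their order are identical.
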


\begin{proof}
  Let $F\subset\R^n$ be a compact real-analytic subset. Being an $ANR$ is a local property for metrizable spaces \cite[Theorem 3.2]{hanner1951theoremsANR}, which $F$ is (as a subset of $\R^n$). By Sullivan's result \cite{sullivan1970combinatorial}, $F$ is locally homeomorphic to a cone over a polyhedron, hence locally an ANR. For example, this follows from \cite[Corollary 3.5]{hanner1951theoremsANR}.
\end{proof}

Combining all the above sections, we can now provide a proof of Lemma \ref{lm:contractible}.

\begin{lemma}
  The fibres of $\psi\colon\cCC_M\rightarrow\cMLrea$ are contractible.
\end{lemma}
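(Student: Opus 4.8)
The plan is to assemble the ingredients already developed in the excerpt. The key point is that $\psi$ is a pointwise limit of homeomorphisms $\psi_K$, but $\psi_K$ lands in the moving target $\cT^{K^*}_{\partial M}$ rather than in $\cML_{\partial M}$; we have built, on each boundary component $S$, the homeomorphisms $u_K:\cT^{K^*}_S\to\cML_S$ of Lemma \ref{lm:earthquakes}, and we take their product over the components of $\partial\bar M$, still denoted $u_K:\cT^{K^*}_{\partial M}\to\cML_{\partial M}$. Set $\tilde\psi_K\coloneqq u_K\circ\psi_K:\cCC(M)\to\cML_{\partial M}$. Since each $\psi_K$ is a homeomorphism (by \cite{hmcb}) and each $u_K$ is a homeomorphism, each $\tilde\psi_K$ is a homeomorphism.

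Next I would check that $\tilde\psi_K\to\psi$ uniformly on compact subsets of $\cCC(M)$, in a suitable sense. Fix a compact $Q\subset\cCC(M)$ and a closed curve $\gamma$ on $\partial\bar M$. By Lemma \ref{lm:convergence} (equivalently Lemma \ref{lm:convergence2}), $L_{\psi_K(g)}(\gamma)\to i(\psi(g),\gamma)$ uniformly for $g\in Q$. Applying Lemma \ref{lm:earthquakes-uniform} with $T=Q$, $h_n(t)=\psi_{K_n}(t)$ and $l(t)=\psi(t)$ gives $i(\tilde\psi_{K_n}(g),\gamma)=i(u_{K_n}(\psi_{K_n}(g)),\gamma)\to i(\psi(g),\gamma)$ uniformly on $Q$. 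Since this holds for every closed curve $\gamma$, and since the marked length spectrum topology (equivalently the weak-$*$ topology as geodesic currents) on $\cML_{\partial M}$ is determined by intersection numbers with closed curves, we conclude $\tilde\psi_{K_n}\to\psi$ uniformly on compacts; as the sequence $K_n\to-1$ was arbitrary this is the full limit $\tilde\psi_K\to\psi$.

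Now I would invoke Theorem \ref{tm:finney} (Finney's pseudo-isotopy criterion): with $N=\cCC(M)$, $N'=\cML_{\partial M}$, the sequence $(\tilde\psi_K)$ of homeomorphisms converging uniformly on compacts to $\psi$, every compact fibre of $\psi$ is a cellular subset of the manifold $\cCC(M)$. The fibre $\psi^{-1}(\{l\})=\cCC_l(M)$ is compact by Lemma \ref{lm:analytic} (properness of the bending map, \cite{lecuire:properness}), so it is cellular. By the same lemma it is a real analytic subvariety of $\cCC(M)$, hence — after passing to a chart identifying an open set of $\cCC(M)$ with an open set of $\R^n$ and noting compactness — it is an ANR by Lemma \ref{lem:compact_real_analytic_subset_is_CANR}. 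Finally Lemma \ref{lm:CANR+cellular_contractible} says a cellular ANR subset of a manifold is contractible. Therefore $\psi^{-1}(\{l\})$ is contractible, which is exactly Lemma \ref{lm:contractible}.

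\begin{proof}
  Fix $l\in\cMLrea$ and write $F\coloneqq\psi^{-1}(\{l\})=\cCC_l(M)$. For each $K\in(-1,0)$ let $u_K:\cT^{K^*}_{\partial M}\to\cML_{\partial M}$ be the product over the connected components of $\partial\bar M$ of the homeomorphisms from Lemma \ref{lm:earthquakes}(1), and set $\tilde\psi_K\coloneqq u_K\circ\psi_K:\cCC(M)\to\cML_{\partial M}$. Since each $\psi_K$ is a homeomorphism (by \cite[Theorem 0.2]{hmcb}) and each $u_K$ is a homeomorphism, each $\tilde\psi_K$ is a homeomorphism.

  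We claim $\tilde\psi_K\to\psi$ uniformly on compact subsets of $\cCC(M)$ in the marked length spectrum topology. Let $Q\subset\cCC(M)$ be compact, let $\gamma$ be a closed curve on $\partial\bar M$, and let $(K_n)_{n\in\N}$ be any sequence in $(-1,0)$ with $K_n\to-1$. By Lemma \ref{lm:convergence} we have $L_{\psi_{K_n}(g)}(\gamma)\to i(\psi(g),\gamma)$ uniformly for $g\in Q$. Applying Lemma \ref{lm:earthquakes-uniform} with $T=Q$, $h_n(t)=\psi_{K_n}(t)$, and $l(t)=\psi(t)$, we obtain $i(\tilde\psi_{K_n}(g),\gamma)=i(u_{K_n}(\psi_{K_n}(g)),\gamma)\to i(\psi(g),\gamma)$ uniformly on $Q$. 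As $\gamma$ and the sequence $(K_n)$ were arbitrary, and intersection numbers with closed curves determine the marked length spectrum topology on $\cML_{\partial M}$, the claim follows.

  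By Theorem \ref{tm:finney} applied to the sequence of homeomorphisms $(\tilde\psi_K)$ converging uniformly on compacts to the continuous map $\psi$, every compact fibre of $\psi$ is a cellular subset of the manifold $\cCC(M)$. By Lemma \ref{lm:analytic}, $F$ is compact, so $F$ is cellular. By the same lemma $F$ is a real analytic subvariety of $\cCC(M)$; working in a coordinate chart and using compactness, Lemma \ref{lem:compact_real_analytic_subset_is_CANR} shows that $F$ is an ANR. Finally, by Lemma \ref{lm:CANR+cellular_contractible}, a cellular ANR subset of a manifold is contractible, so $F=\psi^{-1}(\{l\})$ is contractible.
\end{proof}
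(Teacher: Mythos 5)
Your proof follows essentially the same route as the paper's: compose $\psi_K$ with the earthquake-based homeomorphisms $u_K$, use Lemmas \ref{lm:convergence} and \ref{lm:earthquakes-uniform} to get uniform convergence to $\psi$, then apply Finney's theorem together with the ANR/cellularity lemmas. The only cosmetic gap is that Theorem \ref{tm:finney} requires a fixed metric $d$ on the target for ``uniform convergence on compacts'' to be meaningful; the paper supplies one explicitly by choosing finitely many closed curves $c_1,\dots,c_n$ whose intersection numbers embed $\cML_{\partial M}$ into $\R^n$ and taking the induced distance, whereas you only appeal to the marked length spectrum topology being determined by intersection numbers — adding one sentence fixing such a metric would make this step airtight.
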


\begin{proof}
  For each $K\in (-1,0)$, we introduce the function $U_K:\cT^K_{\partial\bar M}\to \cML_{\partial\bar M}$, defined by applying $u_K$ for each boundary component of $\partial\bar M$.

  It follows directly from Lemma \ref{lm:convergence} and from Lemma \ref{lm:earthquakes-uniform} that for any closed curve $c$ on $\partial\bar M$, $i(U_K\circ\psi_K(-),c)\to i(\psi(-), c)$ uniformly on compact subsets of $\cCC_M$ as $K\to -1$.

  Choose a finite set of closed curves $c_1,\cdots, c_n$ in $\partial \bar M$ such that the function $l\mapsto (i(c_1, l), i(c_2,l),\cdots, i(c_n,l))$ is a topological embedding of $\cML_{\partial\bar M}$ in $\R^n$. We can use those functions to define a distance $D$ on $\cML_{\partial\bar M}$, for instance by
  $$ D(l,l')=\sum_{i=1}^n |i(c_i,l')-i(c_i,l)|~. $$
  Considering this distance $D$ on $\cML_{\partial\bar M}$ it follows from the previous paragraph that $U_K\circ\psi_K$ converges uniformly to $\psi$ on compact subsets of $\cCC_M$, with each $U_K\circ\psi_K$ being a homeomorphism onto its image. It then follows from Lemma \ref{lm:analytic}, Theorem \ref{tm:finney}, Lemma \ref{lm:CANR+cellular_contractible} and Lemma \ref{lem:compact_real_analytic_subset_is_CANR} that the fibres of $\psi$ are contractible.
\end{proof}

\section{Non-contractibility}
\label{sc:non-contractibility}



We provide in this section the proof of Lemma \ref{lm:non-contractible}. It follows directly from a result of Sullivan \cite{sullivan1970combinatorial}.\footnote{This argument was mentioned by user ``Moishe Kohan'' (https://mathoverflow.net/users/39654/moishe-kohan) in a post on mathOverflow, see \texttt{https://mathoverflow.net/q/446497} (version: 2023-05-10).}

Sullivan \cite[Corollary 2]{sullivan1970combinatorial} notices that a subset $X\in \R^n$ which is defined by real-analytic equations can be triangulated, and that it is locally the cone over a polyhedron of even Euler characteristic.


The following statement follows directly. See \cite{borelhaefliger1961classe} for an alternative proof.

\begin{lemma}
    A compact $k$-dimensional real-analytic subset $F$ of $\R^n$ is a $\rm{mod}$ 2 pseudo-manifold, i.e.\ $H_k(F,\Z_2)\neq 0$. In particular, if it is contractible then it must be a point.
\end{lemma}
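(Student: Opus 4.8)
The plan is to deduce the statement directly from Sullivan's local structure result just quoted, using a standard fundamental-class argument for $\mathrm{mod}\ 2$ pseudo-manifolds. First I would invoke Sullivan \cite{sullivan1970combinatorial}: since $F\subset \R^n$ is a compact real analytic subset of dimension $k$, it admits a triangulation in which $F$ is a finite simplicial complex, and near every point $F$ is locally the cone over a polyhedron of even Euler characteristic. I would then reduce the question to understanding the $(k-1)$-simplices of this triangulation, i.e.\ the codimension-one strata. Let $\sigma$ be a $(k-1)$-simplex of $F$, and let $p$ be an interior point of $\sigma$. A small link of $p$ in $F$ is the join of the boundary sphere $S^{k-2}$ of $\sigma$ with the link $\mathrm{Lk}(\sigma)$ of $\sigma$ in $F$, which is a finite set of $m$ points (where $m$ is the number of $k$-simplices containing $\sigma$). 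Sullivan's local condition says that the link of $p$, which is $S^{k-2}*(\text{$m$ points})$, has even Euler characteristic. Computing, $\chi(S^{k-2}*(m\text{ points}))$ equals, up to the usual sign conventions, $\chi(S^{k-2})+m-\chi(S^{k-2})\cdot m$, and since $S^{k-2}$ has Euler characteristic $1+(-1)^{k-2}$, an elementary case check (separating $k$ even and $k$ odd) forces $m$ to be even whenever $\chi$ of the link is even. Hence every $(k-1)$-simplex of $F$ is a face of an even number of $k$-simplices.

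With that combinatorial fact in hand, the next step is the standard argument that the sum of all $k$-simplices, with $\Z_2$ coefficients, is a cycle. Let $z=\sum_{\tau} \tau \in C_k(F;\Z_2)$ be the sum over all $k$-dimensional simplices $\tau$ of the triangulation. Then $\partial z = \sum_\tau \partial \tau = \sum_\sigma n_\sigma\, \sigma$, where the outer sum runs over $(k-1)$-simplices $\sigma$ and $n_\sigma$ is the number of $k$-simplices having $\sigma$ as a face, reduced $\mathrm{mod}\ 2$. By the previous paragraph each $n_\sigma$ is even, so $\partial z = 0$ in $C_{k-1}(F;\Z_2)$, i.e.\ $z$ is a simplicial $k$-cycle. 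Moreover $z\neq 0$ in $C_k(F;\Z_2)$ as long as $F$ has at least one $k$-simplex, which holds since $\dim F = k$. Since $F$ has no simplices of dimension greater than $k$, there are no $(k+1)$-chains, so $z$ is not a boundary; therefore $[z]\neq 0$ in $H_k(F;\Z_2)$, which is precisely the assertion that $F$ is a $\mathrm{mod}\ 2$ pseudo-manifold with $H_k(F;\Z_2)\neq 0$.

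For the final sentence: if $F$ were contractible, then $H_k(F;\Z_2)\cong H_k(\mathrm{pt};\Z_2)$, which is $0$ for $k\geq 1$. Combined with $H_k(F;\Z_2)\neq 0$ this forces $k=0$; a $0$-dimensional compact real analytic set is a finite set of points, and contractibility then forces it to be a single point. This completes the deduction.

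The main obstacle I anticipate is purely bookkeeping around Sullivan's statement: making precise that ``locally a cone over a polyhedron of even Euler characteristic'' applies at a generic (interior) point of every $(k-1)$-simplex and translates into the even-valence condition $n_\sigma \equiv 0 \pmod 2$. One must be slightly careful that Sullivan's Euler-characteristic condition is about the link of a point (equivalently the base of the local cone) rather than the closed star, and that the join formula for Euler characteristics is applied with the right reduced-versus-unreduced conventions; once the local model $S^{k-2}*(m\text{ points})$ is correctly identified, the parity computation is routine. A secondary, minor point is ensuring that the chosen triangulation can be taken fine enough that these local cone structures hold simplex-by-simplex, which is exactly what Sullivan's triangulability result provides. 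No deep input beyond \cite{sullivan1970combinatorial} and elementary simplicial homology is needed.
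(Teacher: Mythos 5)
Your proof is correct and follows essentially the same route as the paper's: both rest on Sullivan's result that a compact real analytic set admits a triangulation in which the sum of top-dimensional simplices is a $\Z_2$-cycle, and then observe that since there are no $(k+1)$-simplices this cycle cannot bound. The only difference is one of detail: the paper simply cites Sullivan's own remark (below Corollary~2 of \cite{sullivan1970combinatorial}) that this chain is a cycle, whereas you re-derive that fact from Sullivan's local even-Euler-characteristic condition by computing $\chi\bigl(S^{k-2}*(m\text{ points})\bigr)$ and extracting the even-valence condition on $(k-1)$-simplices; your parity case check for $k$ even versus $k$ odd is correct. This makes your write-up more self-contained but is not a genuinely different argument.
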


\begin{proof}
    As remarked by Sullivan in \cite{sullivan1970combinatorial} below Corollary 2, the sum of all $k$-dimensional simplices in a triangulation of $F$ is a $k$-cycle $\rm{mod}$ 2. Since $F$ has dimension $k$, this cycle is not a boundary, hence it provides a nonzero element of $H_k(F,\Z_2)$.
\end{proof}

The proof of Lemma \ref{lm:non-contractible} follows directly, since Lemma \ref{lm:analytic} asserts that for all $l\in \cMLrea$, $\psi^{-1}(\{ l\})$ is a compact real-analytic subset of $\cCC_M$. Since it is contractible by Lemma \ref{lm:contractible}, it is a point.


\bibliographystyle{alpha}
\bibliography{bib.bib,/home/jean-marc/Dropbox/papiers/outils/biblio}

\newcommand{\etalchar}[1]{$^{#1}$}
\def\cprime{$'$}
\begin{thebibliography}{BDMS21}

\bibitem[ABB{\etalchar{+}}07]{mess-notes}
Lars Andersson, Thierry Barbot, Riccardo Benedetti, Francesco Bonsante,
  William~M. Goldman, Fran{\c{c}}ois Labourie, Kevin~P. Scannell, and Jean-Marc
  Schlenker.
\newblock Notes on: ``{L}orentz spacetimes of constant curvature'' [{G}eom.
  {D}edicata {\bf 126} (2007), 3--45; mr2328921] by {G}. {M}ess.
\newblock {\em Geom. Dedicata}, 126:47--70, 2007.

\bibitem[Ale05]{alex}
Alexander~D. Alexandrov.
\newblock {\em Convex polyhedra}.
\newblock Springer Monographs in Mathematics. Springer-Verlag, Berlin, 2005.
\newblock Translated from the 1950 Russian edition by N. S. Dairbekov, S. S.
  Kutateladze and A. B. Sossinsky, With comments and bibliography by V. A.
  Zalgaller and appendices by L. A. Shor and Yu. A. Volkov.

\bibitem[And71]{Andreev-ideal}
E.M. Andreev.
\newblock On convex polyhedra of finite volume in {Lobacevskii} space.
\newblock {\em Math. USSR Sbornik}, 12 (3):225--259, 1971.

\bibitem[BBZ11]{BBZ2}
Thierry Barbot, Fran{\c{c}}ois B{\'e}guin, and Abdelghani Zeghib.
\newblock Prescribing {Gauss} curvature of surfaces in 3-dimensional
  spacetimes, application to the {Minkowski} problem in {Minkowski} space.
\newblock {\em Ann. Inst. Fourier (Grenoble)}, 61(1):511--591, 2011.

\bibitem[BDMS21]{convexhull}
Francesco Bonsante, Jeffrey Danciger, Sara Maloni, and Jean-Marc Schlenker.
\newblock The induced metric on the boundary of the convex hull of a
  quasicircle in hyperbolic and anti--de {S}itter geometry.
\newblock {\em Geom. Topol.}, 25(6):2827--2911, 2021.
\newblock With an appendix by Boubacar Diallo.

\bibitem[Bel17]{belraouti:asymptotic}
Mehdi Belraouti.
\newblock Asymptotic behavior of cauchy hypersurfaces in constant curvature
  space--times.
\newblock {\em Geometriae Dedicata}, 190(1):103--133, 2017.

\bibitem[BH61]{borelhaefliger1961classe}
Armand Borel and Andr{\'e} Haefliger.
\newblock La classe d'homologie fondamentale d'un espace analytique.
\newblock {\em Bulletin de la Soci{\'e}t{\'e} Math{\'e}matique de France},
  89:461--513, 1961.

\bibitem[BMS13]{cyclic}
Francesco Bonsante, Gabriele Mondello, and Jean-Marc Schlenker.
\newblock A cyclic extension of the earthquake flow {I}.
\newblock {\em Geom. Topol.}, 17(1):157--234, 2013.

\bibitem[BO04a]{bonahon-otal}
Francis Bonahon and Jean-Pierre Otal.
\newblock Laminations mesur\'ees de plissage des vari\'et\'es hyperboliques de
  dimension 3.
\newblock {\em Ann. Math.}, 160:1013--1055, 2004.

\bibitem[BO04b]{bonahonotal2004laminations}
Francis Bonahon and Jean-Pierre Otal.
\newblock Laminations mesur{\'e}es de plissage des vari{\'e}t{\'e}s
  hyperboliques de dimension 3.
\newblock {\em Annals of mathematics}, pages 1013--1055, 2004.

\bibitem[Bon92]{bonahon1992earthquakes}
Francis Bonahon.
\newblock Earthquakes on riemann surfaces and on measured geodesic laminations.
\newblock {\em Transactions of the American Mathematical Society},
  330(1):69--95, 1992.

\bibitem[Bon96]{bonahon-toulouse}
Francis Bonahon.
\newblock Shearing hyperbolic surfaces, bending pleated surfaces and
  {T}hurston's symplectic form.
\newblock {\em Ann. Fac. Sci. Toulouse Math. (6)}, 5(2):233--297, 1996.

\bibitem[Bon98]{bonahon1998variations}
Francis Bonahon.
\newblock Variations of the boundary geometry of $3 $-dimensional hyperbolic
  convex cores.
\newblock {\em Journal of Differential Geometry}, 50(1):1--24, 1998.

\bibitem[Bon05]{bonahon-almost}
Francis Bonahon.
\newblock Kleinian groups which are almost {F}uchsian.
\newblock {\em J. Reine Angew. Math.}, 587:1--15, 2005.

\bibitem[BS12]{earthquakes}
Francesco Bonsante and Jean-Marc Schlenker.
\newblock Fixed points of compositions of earthquakes.
\newblock {\em Duke Math. J.}, 161(6):1011--1054, 2012.

\bibitem[CM04]{canary2004homotopy}
Richard~D Canary and Darryl McCullough.
\newblock Homotopy equivalences of 3-manifolds and deformation theory of
  kleinian groups.
\newblock {\em Memoirs of the American Mathematical Society}, 172(812), 2004.

\bibitem[CME06]{canaryEpsteinMarden2006fundamentals}
Richard~Douglas Canary, Albert Marden, and David~BA Epstein.
\newblock {\em Fundamentals of hyperbolic manifolds: Selected expositions}.
\newblock Number 328. Cambridge University Press, 2006.

\bibitem[CS22]{short-weyl}
Qiyu Chen and Jean-Marc Schlenker.
\newblock The geometric data on the boundary of convex subsets of hyperbolic
  manifolds, 2022.

\bibitem[Dav86]{daverman1986decompositions}
Robert~J Daverman.
\newblock {\em Decompositions of manifolds}.
\newblock Academic Press, 1986.

\bibitem[Dum08]{dumas-survey}
Emily Dumas.
\newblock Complex projective structures.
\newblock In {\em Handbook of {T}eichm\"uller theory. {V}ol. {II}}, volume~13
  of {\em IRMA Lect. Math. Theor. Phys.}, pages 455--508. Eur. Math. Soc.,
  Z\"urich, 2008.

\bibitem[Fil07]{fillastre2}
Fran{\c{c}}ois Fillastre.
\newblock Polyhedral realisation of hyperbolic metrics with conical
  singularities on compact surfaces.
\newblock {\em Ann. Inst. Fourier (Grenoble)}, 57(1):163--195, 2007.

\bibitem[Fil11]{fillastre3}
Fran{\c{c}}ois Fillastre.
\newblock Fuchsian polyhedra in {L}orentzian space-forms.
\newblock {\em Math. Ann.}, 350(2):417--453, 2011.

\bibitem[Fin67]{finney1967pseudo}
Ross~L Finney.
\newblock Pseudo-isotopies and cellular sets.
\newblock {\em Michigan Mathematical Journal}, 14(4):417--421, 1967.

\bibitem[FS19]{FS19}
Fran\c{c}ois Fillastre and Andrea Seppi.
\newblock Spherical, hyperbolic, and other projective geometries: convexity,
  duality, transitions.
\newblock In {\em Eighteen essays in non-{E}uclidean geometry}, volume~29 of
  {\em IRMA Lect. Math. Theor. Phys.}, pages 321--409. Eur. Math. Soc.,
  Z\"{u}rich, 2019.

\bibitem[Han51]{hanner1951theoremsANR}
Olof Hanner.
\newblock Some theorems on absolute neighborhood retracts.
\newblock {\em Arkiv f{\"o}r Matematik}, 1(5):389--408, 1951.

\bibitem[HR93]{HR}
Craig~D. Hodgson and Igor Rivin.
\newblock A characterization of compact convex polyhedra in hyperbolic 3-space.
\newblock {\em Invent. Math.}, 111:77--111, 1993.

\bibitem[KS95]{keen1995continuity}
Linda~Jo Keen and Caroline Series.
\newblock Continuity of convex hull boundaries.
\newblock {\em Pacific Journal of Mathematics}, 168(1):183--206, 1995.

\bibitem[Lab91]{L6}
Fran{\c{c}}ois Labourie.
\newblock Probl\`eme de {M}inkowski et surfaces \`a courbure constante dans les
  vari\'et\'es hyperboliques.
\newblock {\em Bull. Soc. Math. France}, 119(3):307--325, 1991.

\bibitem[Lab92a]{L4}
Fran\c{c}ois Labourie.
\newblock M\'etriques prescrites sur le bord des vari\'et\'es hyperboliques de
  dimension 3.
\newblock {\em J. Differential Geom.}, 35:609--626, 1992.

\bibitem[Lab92b]{L5}
Fran\c{c}ois Labourie.
\newblock Surfaces convexes dans l'espace hyperbolique et {CP1}-structures.
\newblock {\em J. London Math. Soc., II. Ser.}, 45:549--565, 1992.

\bibitem[Lec04]{lecuire:properness}
Cyril Lecuire.
\newblock Bending map and strong convergence.
\newblock Available on \texttt{https://lecuire.perso.math.cnrs.fr/}., 2004.

\bibitem[Lec06]{lecuire}
Cyril Lecuire.
\newblock Plissage des vari\'et\'es hyperboliques de dimension 3.
\newblock {\em Invent. Math.}, 164(1):85--141, 2006.

\bibitem[LS00]{iie}
Fran{\c{c}}ois Labourie and Jean-Marc Schlenker.
\newblock Surfaces convexes fuchsiennes dans les espaces lorentziens \`a
  courbure constante.
\newblock {\em Math. Ann.}, 316(3):465--483, 2000.

\bibitem[Mar74]{marden1974geometry}
Albert Marden.
\newblock The geometry of finitely generated kleinian groups.
\newblock {\em Annals of Mathematics}, 99(3):383--462, 1974.

\bibitem[Mar16]{marden:hyperbolic}
Albert Marden.
\newblock {\em Hyperbolic manifolds}.
\newblock Cambridge University Press, Cambridge, 2016.
\newblock An introduction in 2 and 3 dimensions.

\bibitem[Maz22]{mazzoli2019-1}
Filippo Mazzoli.
\newblock The dual volume of quasi-{F}uchsian manifolds and the
  {W}eil-{P}etersson distance.
\newblock {\em Trans. Amer. Math. Soc.}, 375(1):695--723, 2022.

\bibitem[Mes07]{mess}
Geoffrey Mess.
\newblock Lorentz spacetimes of constant curvature.
\newblock {\em Geom. Dedicata}, 126:3--45, 2007.

\bibitem[Mes23]{mesbah:induced}
Abderrahim Mesbah.
\newblock The induced metric and bending lamination on the boundary of convex
  hyperbolic 3-manifolds, 2023.

\bibitem[MT98]{matsuzaki1998hyperbolic}
K.~Matsuzaki and M.~Taniguchi.
\newblock {\em Hyperbolic Manifolds and Kleinian Groups}.
\newblock Oxford Mathematical Monographs. Clarendon Press, 1998.

\bibitem[Pog73]{Po}
Aleksei~V. Pogorelov.
\newblock {\em Extrinsic Geometry of Convex Surfaces}.
\newblock American Mathematical Society, 1973.
\newblock Translations of Mathematical Monographs. Vol. 35.

\bibitem[Pro22a]{prosanov:dual}
Roman Prosanov.
\newblock Dual metrics on the boundary of strictly polyhedral hyperbolic
  3-manifolds, 2022.
\newblock arxiv:2203.16971.

\bibitem[Pro22b]{prosanov:polyhedral}
Roman Prosanov.
\newblock Hyperbolic 3-manifolds with boundary of polyhedral type, 2022.
\newblock arxiv:2210.17271.

\bibitem[RH93]{RH}
Igor Rivin and Craig~D. Hodgson.
\newblock A characterization of compact convex polyhedra in hyperbolic 3-space.
\newblock {\em Invent. Math.}, 111:77--111, 1993.

\bibitem[Riv96]{rivin-annals}
Igor Rivin.
\newblock A characterization of ideal polyhedra in hyperbolic 3-space.
\newblock {\em Annals of Math.}, 143:51--70, 1996.

\bibitem[Sch98]{shu}
Jean-Marc Schlenker.
\newblock M\'etriques sur les poly\`edres hyperboliques convexes.
\newblock {\em J. Differential Geom.}, 48(2):323--405, 1998.

\bibitem[Sch01]{ideal}
Jean-Marc Schlenker.
\newblock Hyperbolic manifolds with polyhedral boundary.
\newblock math.GT/0111136, 2001.

\bibitem[Sch06]{hmcb}
Jean-Marc Schlenker.
\newblock Hyperbolic manifolds with convex boundary.
\newblock {\em Invent. Math.}, 163(1):109--169, 2006.

\bibitem[Sch20]{weylsurvey}
Jean-Marc Schlenker.
\newblock On the {Weyl} problem for complete surfaces in the hyperbolic and
  anti-de {Sitter} spaces, 2020.

\bibitem[Sch21]{weylgen}
Jean-Marc Schlenker.
\newblock The {Weyl} problem for unbounded convex domains in $\mathbb{H}^{3}$.
\newblock {\em arXiv preprint arXiv:2106.02101}, 2021.

\bibitem[Sch22]{volumes}
Jean-Marc Schlenker.
\newblock Volumes of quasifuchsian manifolds.
\newblock In {\em Surveys in differential geometry 2020. {S}urveys in
  3-manifold topology and geometry}, volume~25 of {\em Surv. Differ. Geom.},
  pages 319--353. Int. Press, Boston, MA, 2022.

\bibitem[Ser06]{series2006thurstons}
C~Series.
\newblock Thurston's bending measure conjecture for once punctured torus
  groups.
\newblock {\em Spaces of Kleinian groups}, 329:75, 2006.

\bibitem[Slu18]{slutskiy:compact}
Dmitriy Slutskiy.
\newblock Compact domains with prescribed convex boundary metrics in
  quasi-fuchsian manifolds.
\newblock {\em Bull. Soc. Math. France}, 146(2):309--353, 2018.

\bibitem[Sul70]{sullivan1970combinatorial}
D~Sullivan.
\newblock Combinatorial invariants of analytic spaces.
\newblock {\em Lecture Notes in Math.}, 192:165--168, 1970.

\bibitem[Sul71]{sullivan:combinatorial}
D.~Sullivan.
\newblock Combinatorial invariants of analytic spaces.
\newblock In {\em Proceedings of {L}iverpool {S}ingularities---{S}ymposium, {I}
  (1969/70)}, volume Vol. 192 of {\em Lecture Notes in Math.}, pages 165--168.
  Springer, Berlin-New York, 1971.

\bibitem[Thu80]{thurston-notes}
William~P. Thurston.
\newblock Three-dimensional geometry and topology.
\newblock Originally notes of lectures at Princeton University, 1979. Recent
  version available on http://www.msri.org/publications/books/gt3m/, 1980.

\bibitem[Thu86]{thurston-earthquakes}
William~P. Thurston.
\newblock Earthquakes in two-dimensional hyperbolic geometry.
\newblock In {\em Low-dimensional topology and Kleinian groups
  (Coventry/Durham, 1984)}, volume 112 of {\em London Math. Soc. Lecture Note
  Ser.}, pages 91--112. Cambridge Univ. Press, Cambridge, 1986.

\end{thebibliography}
\end{document}